\newtheorem{theorem}{Theorem}[section]
\theoremstyle{definition}
\newtheorem{definition}[theorem]{Definition}
\newtheorem{example}[theorem]{Example}
\newtheorem{proposition}[theorem]{Proposition}
\newtheorem{corollary}[theorem]{Corollary}
\theoremstyle{remark}
\newtheorem{remark}[theorem]{Remark}
\numberwithin{equation}{section}
\begin{document}

\title{ (weakly) $(\alpha,\beta)$-prime hyperideals in commutative multiplicative hyperrings }

\author{Mahdi Anbarloei}
\address{Department of Mathematics, Faculty of Sciences,
Imam Khomeini International University, Qazvin, Iran.
}

\email{m.anbarloei@sci.ikiu.ac.ir}


\subjclass[2010]{ Primary 20N20; Secondary 16Y99}


\keywords{  $(\alpha,\beta)$-prime, weakly $(\alpha,\beta)$-prime, $(\alpha,\beta)$-zero.}

\begin{abstract}
 Let $H$ be a commutative  multiplicative hyperring and $\alpha,\beta \in \mathbb{Z}^+$. A proper hyperideal $P$ of $H$ is called (weakly) $(\alpha,\beta)$-prime  if ($0 \notin x^{\alpha} \circ y \subseteq P$ ) $x^{\alpha} \circ y \subseteq P$ for $x,y \in H$ implies $x^{\beta} \subseteq P$ or $y \in P$.    In this paper, we aim to  investigate  (weakly) $(\alpha,\beta)$-prime hyperideals and then we  present some properties of them. 
\end{abstract}
\maketitle

\section{Introduction}
Delving into the study of prime ideal generalizations has emerged as a profoundly intriguing and groundbreaking pursuit in the realm of commutative ring theory.
In a recent study \cite{Khashan1},   Khashan and  Celikel  presented  $(\alpha,\beta)$-prime ideals which is an  intermediate class between  prime ideals and $(\alpha,\beta)$-closed ideals. 
 Let   $\alpha, \beta \in \mathbb{Z}^+$. A proper ideal $I$ of a commutative ring $R$ refers to an $(\alpha,\beta)$-prime ideal if for $x,y \in R$, $x^{\alpha}y \in I$ implies either $x^{\beta} \in I$ or $y \in I$. Moreover, the authors  generalized this notion to weakly $(\alpha,\beta)$-prime ideals  in \cite{Khashan2}. A proper ideal $I$ of $R$ is called a weakly $(\alpha,\beta)$-prime ideal if for $x,y \in R$ with $0 \neq x^{\alpha}y \in I$, then either $x^{\beta} \in I$ or $y \in I$.

Several key concepts in modern algebra were expanded by extending their underlying structures to hyperstructures. In 1934 \cite{marty}, the French mathematician F. Marty pioneered the notion of hyperstructures or multioperations, where an operation yields a set of values rather than a single value.
 Subsequently, numerous authors have contributed to the advancement of this novel area of modern algebra \cite{f1,f2,f3,f4,f5,f7,f8,f9}. 
An importan type of the algebraic hyperstructures  called the multiplicative hyperring was introduced by Rota in 1982 \cite{f14}. In this hyperstructure,  the multiplication is a hyperoperation and  the addition is an operation.  Multiplicative hyperrings are richly demonstrated and characterized in  \cite{ameri5, ameri6,  anb4,   Kamali, Ghiasvand, Ghiasvand2, f16, ul}.  Dasgupta studied the prime and primary hyperideals in multiplicative hyperrings in \cite{das}. The idea of $(\alpha,\beta)$-closed hyperideals in a multiplicative hyperring was proposed in \cite{anb7}. A proper hyperideal $P$ of a multiplicative hyperring $H$ is said to be $(\alpha,\beta)$-closed if for $x \in H$ with $x^{\alpha} \subseteq P$, then $x^{\beta} \subseteq P$. Motivated from this notion, the aim of this research work is to  introduce and study   the notion of $(\alpha,\beta)$-prime  hyperideals in  a commutative multiplicative hyperring.   Several specific results  are given to illustrate  the structures of the new notion. Furthermore,  we  extend this notion to weakly $(\alpha,\beta)$-prime  hyperideals. 
We present some characterizations of weakly $(\alpha,\beta)$-prime  hyperideals on cartesian product of commutative multiplicative hyperring.  

\section{Some basic definitions concerning multiplicative hyperrings}
In this section we give some basic definitions and results  which we need to
develop our paper.  \cite{f10} A hyperoperation $``\circ" $ on nonempty set $I$ is a mapping from $I \times I$ into $P^*(I)$ where $P^*(I)$ is the family of all nonempty subsets of $I$. If $``\circ" $ is a hyperoperation on $I$, then $(I,\circ)$ is called hypergroupoid. The hyperoperation on $I$ can be extended to  subsets of $I$. Let $I_1,I_2$ be two subsets of $I$ and $a \in I$, then $I_1 \circ I_2 =\cup_{a_1 \in I_1, a_2 \in I_2}a_1 \circ a_2,$ and $ I_1 \circ a=I_1 \circ \{a\}.$ A hypergroupoid $(I, \circ)$ is called  a semihypergroup if $\cup_{v \in b \circ c}a \circ v=\cup_{u \in a \circ b} u \circ c $ for all $a,b,c \in I$ which means $\circ$ is associative. A semihypergroup $I$ is said to be a hypergroup if  $a \circ I=I=I\circ a$ for all  $a \in I$. A nonempty subset $J$ of a semihypergroup $(I,\circ)$ is called a
subhypergroup if  we have $a \circ J=J=J \circ a$ for all $a \in J$. 

\begin{definition}
 \cite{f10} An algebraic structure $(H,+,\circ)$ is said to be commutative multiplicative hyperring if 
 \begin{itemize}
\item[\rm(1)]~ $(H,+)$ is a commutative group; 
\item[\rm(2)]~ $(H,\circ)$ is a semihypergroup; 
\item[\rm(3)]~ for all $a, b, c \in H$, we have $a\circ (b+c) \subseteq a\circ b+a\circ c$ and $(b+c)\circ a \subseteq b\circ a+c\circ a$; 
\item[\rm(4)]~ for all $a, b \in H$, we have $a\circ (-b) = (-a)\circ b = -(a\circ b)$; 
\item[\rm(5)]~ for all $a,b \in H$, $a \circ b =b \circ a$.
 \end{itemize}
 \end{definition}
If in (3) the equality holds then we say that the multiplicative hyperring is strongly distributive. 
Let $(\mathbb{Z},+,\cdot)$ be the ring of integers. Corresponding to every subset $X \in P^\star(\mathbb{Z})$ with $\vert X\vert \geq 2$, there exists a multiplicative hyperring $(\mathbb{Z}_X,+,\circ)$ with $\mathbb{Z}_X=\mathbb{Z}$ and for any $a,b\in \mathbb{Z}_X$, $a \circ b =\{a.x.b\ \vert \ x \in X\}$ \cite{das}.
 \begin{definition}\cite{ameri} 
 An element $e$ in $H$ is  an identity element if $a \in a\circ e$ for all $a \in H$. Moreover, an element $e$ in $H$ is  a scalar identity element if $a= a\circ e$ for all $a \in H$.
\end{definition}
Throughout this paper, $H$ denotes a commutative multiplicative hyperring with identity 1. 
\begin{definition} \cite{f10} A non-empty subset $A$ of  $H$ is a  hyperideal  if 
 \begin{itemize}
\item[\rm(i)]~  $a - b \in A$ for all $a, b \in A$; 
\item[\rm(ii)]~ $r \circ a \subseteq A$ for all $a \in A $ and $r \in H$.
 \end{itemize}
 \end{definition}
\begin{definition}
 \cite{das} A proper hyperideal $A$ of   $H$ is called a prime hyperideal if $a \circ b \subseteq A$ for $a,b \in H$ implies that $a \in A$ or $b \in A$. 
 \end{definition} 
  The intersection of all prime hyperideals of $H$ containing a hyperideal $A$ is called the prime radical of $A$,  denoted by $rad (A)$. If the multiplicative hyperring $H$ does not have any prime hyperideal containing $A$, we define $rad(A)=H$. Let {\bf C} be the class of all finite products of elements of $H$ that is ${\bf C} = \{r_1 \circ r_2 \circ . . . \circ r_n \ : \ r_i \in G, n \in \mathbb{N}\} \subseteq P^{\ast }(H)$. A hyperideal $A$ of $H$ is said to be a {\bf C}-hyperideal of $H$ if, for any $J \in {\bf C}, A \cap J \neq \varnothing $ implies $J \subseteq A$.
Let $A$ be a hyperideal of $H$. Then, $D \subseteq rad(A)$ where $D = \{r \in H \ \vert \  r^n \subseteq A \ for \ some \ n \in \mathbb{N}\}$. The equality holds when $A$ is a {\bf C}-hyperideal of $H$ (see Proposition 3.2 in \cite{das}). Recall that a hyperideal $A$ of $H$ is called a strong {\bf C}-hyperideal if for any $E \in \mathfrak{U}$, $E \cap A \neq \varnothing$, then $E \subseteq A$, where $\mathfrak{U}=\{\sum_{i=1}^n J_i \ \vert \ J_i \in {\bf C}, n \in \mathbb{N}\}$ and ${\bf C} = \{r_1 \circ r_2 \circ . . . \circ r_n \ \vert \ r_i \in H, n \in \mathbb{N}\}$ (for more details see \cite{phd}). 
\begin{definition} \cite{ameri} 
A proper hyperideal $A$ of 
 $H$ is maximal in $H$ if for
any hyperideal $B$ of $H$ with $A \subset B \subseteq H$ then $B = H$.
\end{definition}
 Also, we say that $H$ is a local multiplicative hyperring, if it has just one maximal hyperideal.   
\begin{definition} \cite{ameri}  Assume that  $A$ and $B$ are hyperideals of $H$. We define $(A:B)=\{x \in H \ \vert \ x \circ B \subseteq A\}$.
\end{definition}
\section{  $(\alpha, \beta)$-prime hyperideals }
In this section, we introduce the concept of $(\alpha, \beta)$-prime hyperideal  and give some basic properties of them. 
\begin{definition}
Let $\alpha, \beta \in \mathbb{Z}^+$. A proper hyperideal $P$ of  $H$  is called  $(\alpha,\beta)$-prime if $x^{\alpha}\circ y \subseteq P$ for $x,y \in H$ implies $x^{\beta} \subseteq P$ or $y \in P$.
\end{definition}
\begin{example}
Consider the multiplicative hyperring $\mathbb{Z}_A$. Let $p$ is a prime integer such that $A \cap \langle p \rangle = \varnothing$. Then $\langle p \rangle$ is an $(\alpha,\beta)$-prime for all $\alpha,\beta \in \mathbb{Z}^+$.
\end{example}
\begin{remark} \label{close1}
Let $\alpha, \beta \in \mathbb{Z}^+$.
\begin{itemize}
\item[\rm{(i)}]~ Every $(\alpha,\beta)$-prime hyperideal of $H$ is  $(\alpha,\beta)$-closed.
\item[\rm{(ii)}]~ Every $(\alpha,\beta)$-prime hyperideal of $H$ is primary.
\end{itemize}
\end{remark}
The following example shows that the converse part of statements  in Remark \ref{close1}  may not be true, in general.
\begin{example}
Consider the multiplicative hyperring  $(\mathbb{Z}_X,+,\circ)$.
\begin{itemize}
\item[\rm{(i)}]~ Let $X=\{2,3\}$. Then $P=\langle 6 \rangle$ is an $(3,2)$-closed hyperideal of $\mathbb{Z}$, but it is not $(3,2)$-prime as $2^3\circ 3 \subseteq P$ where neither $2^2=\{8,12\} \nsubseteq P$ nor $3 \notin P$.
\item[\rm{(ii)}]~ Let $X=\{2,4\}$. Then $P=\langle 8 \rangle$ is a primary hyperideal of the multiplicative hyperring $(\mathbb{Z},+,\circ)$. However, it is not $(4,3)$-prime as the fact that $1^4\circ4 \subseteq P$ but $1^3=\{4,8,16\} \nsubseteq  P$ and $4 \notin P$.
\end{itemize}
\end{example}
Our first theorem presents a characterization of $(\alpha,\beta)$-prime hyperideals
\begin{theorem} \label{1/1}
Assume that $P \neq H$ is a  hyperideal of $H$ and $\alpha, \beta \in \mathbb{Z}^+$. Then the following are equivalent:
\begin{itemize}
\item[\rm{(i)}]~ $P$ is an $(\alpha,\beta)$-prime hyperideal in $H$.
\item[\rm{(ii)}]~ $P=(P:x^{\alpha})$ such that $x^{\beta} \nsubseteq P$ for  $x \in H$.
\item[\rm{(iii)}]~ If $x^{\alpha} \circ P^{\prime} \subseteq P$ for some hyperideal $P^{\prime}$ of $H$ and $x \in H$, then $x^{\beta} \subseteq P$ or $P^{\prime} \subseteq P$.
\end{itemize}
\end{theorem}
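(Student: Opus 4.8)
The plan is to prove the three implications in the cycle $(i)\Rightarrow(ii)\Rightarrow(iii)\Rightarrow(i)$, which is the most economical route. Throughout I will freely use that $(P:x^{\alpha})=\{r\in H : r\circ x^{\alpha}\subseteq P\}$ is always a hyperideal containing $P$ (since $P$ is a hyperideal and $P\circ x^{\alpha}\subseteq P$), so the content of (ii) is really the reverse inclusion $(P:x^{\alpha})\subseteq P$ whenever $x^{\beta}\nsubseteq P$.

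For $(i)\Rightarrow(ii)$: fix $x\in H$ with $x^{\beta}\nsubseteq P$. We always have $P\subseteq(P:x^{\alpha})$. For the other inclusion, take $y\in(P:x^{\alpha})$, so $y\circ x^{\alpha}\subseteq P$, i.e. $x^{\alpha}\circ y\subseteq P$ by commutativity. Since $P$ is $(\alpha,\beta)$-prime, either $x^{\beta}\subseteq P$ — excluded by hypothesis — or $y\in P$. Hence $(P:x^{\alpha})\subseteq P$ and equality holds.

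For $(ii)\Rightarrow(iii)$: suppose $x^{\alpha}\circ P'\subseteq P$ for a hyperideal $P'$ and some $x\in H$, and suppose $x^{\beta}\nsubseteq P$. By (ii), $(P:x^{\alpha})=P$. But for every $a\in P'$ we have $x^{\alpha}\circ a\subseteq x^{\alpha}\circ P'\subseteq P$, so $a\in(P:x^{\alpha})=P$; therefore $P'\subseteq P$, which is exactly the conclusion of (iii). For $(iii)\Rightarrow(i)$: suppose $x^{\alpha}\circ y\subseteq P$ for $x,y\in H$. Let $P'=\langle y\rangle$ be the principal hyperideal generated by $y$. One checks that $x^{\alpha}\circ\langle y\rangle\subseteq P$: every element of $\langle y\rangle$ is a finite sum of terms of the form $r\circ y$ (together with integer multiples of $y$), and using the distributivity inclusion in Definition~(3) together with $x^{\alpha}\circ(r\circ y)=(x^{\alpha}\circ r)\circ y\subseteq r\circ(x^{\alpha}\circ y)\subseteq r\circ P\subseteq P$, one gets $x^{\alpha}\circ\langle y\rangle\subseteq P$. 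Now (iii) gives $x^{\beta}\subseteq P$ or $\langle y\rangle\subseteq P$, and the latter forces $y\in P$; so $P$ is $(\alpha,\beta)$-prime.

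The routine-but-delicate point — the one place where care is needed — is verifying $x^{\alpha}\circ\langle y\rangle\subseteq P$ in the last implication, because in a multiplicative hyperring the distributive law is only a one-sided inclusion $a\circ(b+c)\subseteq a\circ b+a\circ c$ and products of sets must be handled with the conventions $I_1\circ I_2=\bigcup a_1\circ a_2$; so one must write a generic element of $\langle y\rangle$ explicitly (a finite sum $\sum (n_i y + r_i\circ y_i')$ with $y_i'$ themselves in the generated set, or more simply observe $\langle y\rangle = \mathbb{Z}y + H\circ y$) and push the inclusion through term by term. Everything else is a direct unwinding of the definition of $(P:x^{\alpha})$ together with commutativity of $\circ$, and no structural obstacle arises.
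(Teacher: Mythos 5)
Your proof is correct and takes essentially the same route as the paper: the same cycle $(i)\Rightarrow(ii)\Rightarrow(iii)\Rightarrow(i)$, with $(iii)\Rightarrow(i)$ handled by taking $P'=\langle y\rangle$. The only difference is that the paper obtains $x^{\alpha}\circ\langle y\rangle\subseteq P$ by citing the containment $\langle x^{\alpha}\rangle\circ\langle y\rangle\subseteq\langle x^{\alpha}\circ y\rangle$ (Proposition 2.15 of Dasgupta), whereas you verify it directly from the hyperideal axioms and the one-sided distributivity, which is a correct and harmless substitution.
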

\begin{proof}
(i) $\Longrightarrow$ (ii) Suppose that $x^{\beta} \nsubseteq P$ for  $x \in H$. Take any $y \in (P:x^{\alpha})$. So we have $x^{\alpha} \circ y \subseteq P$. Since $P$ is an $(\alpha,\beta)$-prime hyperideal in $H$ and $x^{\beta} \nsubseteq P$, we get $y \in P$ which means $(P:x^{\alpha}) \subseteq P$. Since the inclusion $P \subseteq (P:x^{\alpha})$ always holds, we obtain $P=(P:x^{\alpha})$. 

(ii) $\Longrightarrow$ (iii) Let $P^{\prime}$ be a hyperideal of $H$ and $x \in H$ such that $x^{\alpha} \circ P^{\prime} \subseteq P$. If $x^{\beta} \subseteq P$, we are done. If $x^{\beta} \nsubseteq P$, we get the result that $ (P:x^{\alpha})=P$ by (ii), and so $P^{\prime} \subseteq P$. 

(iii) $\Longrightarrow$ (i) Let $x^{\alpha} \circ y \subseteq P$ for $x,y \in H$. Then $\langle x^{\alpha} \rangle \circ \langle y \rangle \subseteq  \langle x^{\alpha} \circ y\rangle \subseteq P$ by Proposition 2.15 in \cite{das}. Put $\langle y \rangle =P^{\prime}$. Hence we have $x^{\alpha} \circ P^{\prime} \subseteq P$. From (iii) it follows that either $x^{\beta} \subseteq P$ or $ y \in P^{\prime} \subseteq P$, as needed.
\end{proof}
Recall from \cite{das} that a hyperideal $A$ of $H$ refers to a principal hyperideal if $A=\langle x \rangle$ for $x \in H$. A hyperring whose every hyperideal is principal is called principal hyperideal hyperring.
\begin{proposition} \label{1/2}
Let $P \neq H$ be a hyperideal of a principal hyperideal hyperring $H$ and $\alpha, \beta \in \mathbb{Z}^+$. Then the following statements are equivalent:
\begin{itemize}
\item[\rm{(i)}]~ $P$ is an $(\alpha,\beta)$-prime hyperideal in $H$.
\item[\rm{(ii)}]~ If $P_1^{\alpha} \circ P_2 \subseteq P$ for hyperideals $P_1, P_2$ of $H$, then $P_1^ {\beta} \subseteq P$ or $P_2\subseteq P$. 
\item[\rm{(iii)}]~ $P=(P:P_1^{\alpha})$ such that $P_1^{\beta} \nsubseteq P$ for every hyperideal $P_1$ of $H$.
\item[\rm{(iv)}]~ If $P_1^{\alpha} \circ y \subseteq P$ where $y \in H$ and $P_1$ is a hyperideal of $H$, then $P_1^{\beta} \subseteq P$ or $y \in P$.
\end{itemize}
\end{proposition}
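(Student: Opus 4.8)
The plan is to prove the equivalences in a cycle, following the template already established in Theorem~\ref{1/1}, and then leveraging the fact that every hyperideal of $H$ is principal to upgrade element-wise statements to hyperideal-wise statements. First I would observe that (i) $\Longleftrightarrow$ (iv) is essentially free: the implication (iv) $\Longrightarrow$ (i) follows by taking $P_1 = \langle x\rangle$ and using Proposition 2.15 of \cite{das} to get $\langle x\rangle^{\alpha}\circ y \subseteq \langle x^{\alpha}\circ y\rangle \subseteq P$, while also noting $x^{\beta}\subseteq P \iff \langle x\rangle^{\beta}\subseteq P$; conversely (i) $\Longrightarrow$ (iv) uses that $H$ is a principal hyperideal hyperring, so $P_1 = \langle x\rangle$ for some $x$, and then $x^{\alpha}\circ y \subseteq P_1^{\alpha}\circ y \subseteq P$ triggers the $(\alpha,\beta)$-prime hypothesis.

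Next I would show (i) $\Longrightarrow$ (ii). Suppose $P_1^{\alpha}\circ P_2 \subseteq P$ with $P_1^{\beta}\nsubseteq P$. Writing $P_1 = \langle x\rangle$ and taking any $y \in P_2$, we get $x^{\alpha}\circ y \subseteq P_1^{\alpha}\circ P_2 \subseteq P$; since $x^{\beta}\subseteq P$ would give $P_1^{\beta} = \langle x\rangle^{\beta}\subseteq \langle x^{\beta}\rangle \subseteq P$, contradicting our assumption, the $(\alpha,\beta)$-prime property forces $y \in P$. As $y$ was arbitrary, $P_2 \subseteq P$. The step (ii) $\Longrightarrow$ (iii) mirrors (i) $\Longrightarrow$ (ii) of Theorem~\ref{1/1}: assuming $P_1^{\beta}\nsubseteq P$, take any $y \in (P:P_1^{\alpha})$, so $P_1^{\alpha}\circ \langle y\rangle \subseteq P_1^{\alpha}\circ (P : P_1^\alpha)\cdot\text{stuff}$ — more carefully, $y\in (P:P_1^\alpha)$ means $y \circ P_1^{\alpha}\subseteq P$, hence with $P_2 = \langle y\rangle$ we have $P_1^{\alpha}\circ \langle y\rangle \subseteq \langle P_1^{\alpha}\circ y\rangle \subseteq P$, and (ii) yields $\langle y\rangle \subseteq P$, i.e. $y \in P$; this gives $(P:P_1^{\alpha})\subseteq P$, and the reverse inclusion is automatic.

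Then (iii) $\Longrightarrow$ (iv) follows the pattern of (ii) $\Longrightarrow$ (iii) in Theorem~\ref{1/1}: given $P_1^{\alpha}\circ y \subseteq P$ with $P_1^{\beta}\nsubseteq P$, statement (iii) gives $(P:P_1^{\alpha}) = P$, and since $P_1^{\alpha}\circ y \subseteq P$ means $y \in (P:P_1^{\alpha})$, we conclude $y \in P$. Closing the loop with (iv) $\Longrightarrow$ (i) as described above completes the proof. I expect the main obstacle to be bookkeeping around the distinction between the element $x$, the hyperideal $\langle x\rangle$, and products like $\langle x\rangle^{\alpha}$ versus $\langle x^{\alpha}\rangle$; the key technical lemma throughout is Proposition 2.15 of \cite{das} (that $\langle a\rangle \circ \langle b\rangle \subseteq \langle a\circ b\rangle$) together with the equivalence $x^{\beta}\subseteq P \iff \langle x\rangle^{\beta}\subseteq P$, which holds because $P$ is a hyperideal absorbing $H$-multiplication. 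Care is also needed to verify that every step that invokes ``$P_1 = \langle x\rangle$'' genuinely uses the principal hyperideal hyperring hypothesis, since that is exactly what separates Proposition~\ref{1/2} from the weaker Theorem~\ref{1/1}.
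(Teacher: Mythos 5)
Your proposal is correct and follows essentially the same route as the paper: the cycle (i)~$\Rightarrow$~(ii)~$\Rightarrow$~(iii)~$\Rightarrow$~(iv)~$\Rightarrow$~(i), writing $P_1=\langle x\rangle$ by principality, using $\langle x\rangle^{\beta}\subseteq\langle x^{\beta}\rangle\subseteq P$ (iterated Proposition 2.15 of \cite{das} plus absorption), and in (ii)~$\Rightarrow$~(iii) passing from $y\in(P:P_1^{\alpha})$ to $P_1^{\alpha}\circ\langle y\rangle\subseteq P$, which the paper justifies slightly more cleanly by noting $(P:P_1^{\alpha})$ is a hyperideal containing $\langle y\rangle$. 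Your separately argued equivalence (i)~$\Leftrightarrow$~(iv) is redundant once the cycle is closed, but harmless.
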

\begin{proof}
(i) $\Longrightarrow$ (ii) Let $P_1^{\alpha} \circ P_2 \subseteq P$ for hyperideals $P_1, P_2$ of $H$. Since $H$ is a principal hyperideal hyperring, there exists $x \in H$ such that $P_1=\langle x \rangle$ and so $x^{\alpha} \circ P_2  \subseteq P$. Since $P$ is an $(\alpha,\beta)$-prime hyperideal in $H$, by Theorem \ref{1/1} we conclude that  $x^{\beta} \subseteq P$ which means $P_1^{\beta}=\langle x \rangle ^{\beta} \subseteq  \langle x^{\beta} \rangle  \subseteq P$  or $P_2 \subseteq P$.

(ii) $\Longrightarrow$ (iii) Let $y \in (P:P_1^{\alpha})$ and $P_1^{\beta} \nsubseteq P$ for a hyperideal $P_1$ of $H$. Then $\langle y \rangle \subseteq  (P:P_1^{\alpha})$ as $(P:P_1^{\alpha})$ is a hyperideal of $H$ by Theoremm 3.8 in \cite{ameri}, and so $P_1^{\alpha} \circ \langle y \rangle \subseteq P$. By the hypothesis, we get $y \in \langle y \rangle \subseteq P$ which implies $(P:P_1^{\alpha}) \subseteq P$. The other containment is clear.

(iii) $\Longrightarrow$ (iv) Let $P_1^{\alpha} \circ y \subseteq P$ and $P_1^{\beta} \nsubseteq P$. Then we have $y \in (P:P_1^{\alpha})=P$, as required.

(iv) $\Longrightarrow$ (i) Let $x^{\alpha} \circ y \subseteq P$ for $x, y \in H$. We assume that $P_1=\langle x \rangle$. Hence we have $P_1^{\alpha} \circ y \subseteq \langle x \rangle^{\alpha} \circ \langle y \rangle \subseteq  \langle x^{\alpha} \circ y \rangle \subseteq P$. By the assumption, we get the result that $x^{\beta} \subseteq P_1 ^{\beta} \subseteq P$ or $y \in P$. Consequently, $P$ is an $(\alpha,\beta)$-prime hyperideal in $H$.
\end{proof}
By Remark \ref{close1}, every $(\alpha,\beta)$-prime $\mathcal{C}$-hyperideal of $H$ is a primary hyperideal and so its radical is a prime hyperideal of $H$ by Proposition 3.6 in \cite{das}. Let $P$ be an $(\alpha,\beta)$-prime $\mathcal{C}$-hyperideal of $H$. Then  $P$ is referred as a $Q$-$(\alpha,\beta)$-prime $\mathcal{C}$-hyperideal of $H$ where $rad(P)=Q$.
\begin{theorem} \label{1}
If $P$ is a $Q$-$(\alpha,\beta)$-prime $\mathcal{C}$-hyperideal of $H$ for $\alpha, \beta \in \mathbb{Z}^+$, then $Q=\{x \in H \ \vert \ x^{\beta} \subseteq P\}$.
\end{theorem}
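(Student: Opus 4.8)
The plan is to prove the two inclusions $Q \subseteq \{x \in H \mid x^\beta \subseteq P\}$ and $\{x \in H \mid x^\beta \subseteq P\} \subseteq Q$ separately, using the $\mathcal{C}$-hyperideal hypothesis to control radicals via the explicit description $rad(P) = \{r \in H \mid r^n \subseteq P \text{ for some } n \in \mathbb{N}\}$ recalled from Proposition 3.2 of \cite{das}.

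For the inclusion $\{x \in H \mid x^\beta \subseteq P\} \subseteq Q$: if $x^\beta \subseteq P$ for some $x \in H$, then taking $n = \beta$ in the description of $rad(P)$ immediately gives $x \in rad(P) = Q$. This direction is essentially free.

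For the reverse inclusion $Q \subseteq \{x \in H \mid x^\beta \subseteq P\}$, I would take $x \in Q = rad(P)$, so $x^n \subseteq P$ for some $n \in \mathbb{N}$. If $n \le \beta$, then since $P$ is a hyperideal, $x^\beta \subseteq x^n \circ x^{\beta - n} \subseteq P$ (using $x^n \subseteq P$ and absorption), and we are done. The substantive case is $n > \beta$; here I would argue by downward induction on $n$. Write $x^n = x^\alpha \circ x^{n-\alpha}$ when $n > \alpha$ — actually the cleaner move is: since $P$ is $(\alpha,\beta)$-prime, from $x^\alpha \circ x^{n - \alpha} \subseteq P$ (valid once $n \ge \alpha$, and one should first bootstrap from small $n$ up to at least $\alpha$ by multiplying, or handle $n<\alpha$ by noting $x^n\subseteq P$ forces $x^{n}\circ H^{\alpha-n}$-type containments) we get either $x^\beta \subseteq P$ (done) or $x^{n-\alpha} \in P$, hence $x^{n-\alpha} \subseteq \langle x^{n-\alpha}\rangle$... more simply $x^{n-\alpha} \in P$ gives $x^{n-\alpha+1} \subseteq P$ if needed, reducing the exponent witnessing membership in $rad(P)$. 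Iterating, the exponent strictly decreases by $\alpha$ each time until it drops to at most $\beta$ or we directly conclude $x^\beta \subseteq P$, at which point the first case applies. Care is needed to write the single-element-versus-singleton-product bookkeeping correctly, e.g. "$y \in P$" versus "$\{y\} \subseteq P$" and the passage from $y \in P$ to $y^k \subseteq P$ via absorption.

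The main obstacle I anticipate is the exponent-reduction argument when $\alpha$ does not divide $n - \beta$ evenly and when $n$ starts below $\alpha$: one must ensure that repeatedly peeling off $\alpha$ from the exponent (via the $(\alpha,\beta)$-prime property applied to $x^\alpha \circ x^{m-\alpha} \subseteq P$) genuinely terminates in the desired regime rather than overshooting into a situation with no longer any valid decomposition. The fix is to observe that as soon as the current exponent $m$ satisfies $m \le \beta$ we are finished, and otherwise $m \ge \beta + 1$; if also $m \ge \alpha$ we can apply the prime property to lower it, and if $\beta < m < \alpha$ we instead bump $m$ up to $\alpha$ first using $x^m \subseteq P \Rightarrow x^\alpha \subseteq x^m \circ x^{\alpha - m} \subseteq P$, which already yields $x^\beta\subseteq P$ since $\beta<\alpha$ and $x^\beta\subseteq x^\alpha\circ x^{?}$—wait, that needs $\beta\ge$ nothing, just $x^\alpha\subseteq P$ and $\beta\le\alpha$... actually $x^\beta \supseteq$ is wrong direction, so instead use $x^\alpha\subseteq P$ to get into $rad(P)$ with exponent $\alpha$ and recurse. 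A clean termination invariant — "either done, or strictly smaller witnessing exponent after one step" — resolves this, and spelling it out carefully is the only delicate point; the rest is routine.
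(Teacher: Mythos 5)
Your easy inclusion is fine, and the reduction-by-$\alpha$ strategy for $Q\subseteq\{x\in H\mid x^{\beta}\subseteq P\}$ could in principle be made to work, but as written there is a genuine gap at exactly the boundary case you flag and then fail to resolve: the case $\beta<m\le\alpha$ for the current witnessing exponent $m$ (including possibly the initial $n$ itself). Your proposed fix --- bump $m$ up to $\alpha$ via $x^{\alpha}=x^{\alpha-m}\circ x^{m}\subseteq P$ and ``recurse'' --- sends you to $m=\alpha$, where your decomposition $x^{\alpha}\circ x^{m-\alpha}$ has an empty second factor and no step of your scheme applies; the recursion loops instead of terminating. Concretely, if $x^{\alpha}\subseteq P$ with $\alpha>\beta$ and no smaller power of $x$ lies in $P$, your argument produces no conclusion, yet $x^{\beta}\subseteq P$ is precisely what must be shown (this is the content of ``$(\alpha,\beta)$-prime implies $(\alpha,\beta)$-closed,'' which is not free). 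Closing the gap needs an extra idea, e.g.: pick any $z\in H\setminus P$ (possible since $P$ is proper); then $x^{\alpha}\circ z\subseteq P$ by absorption, and $(\alpha,\beta)$-primeness with $z\notin P$ forces $x^{\beta}\subseteq P$. A secondary, fixable imprecision: from $x^{\alpha}\circ y\subseteq P$ for $y\in x^{n-\alpha}$ you write ``$x^{n-\alpha}\in P$''; you should either quantify over all such $y$ (if $x^{\beta}\nsubseteq P$, then every $y\in x^{n-\alpha}$ lies in $P$, whence $x^{n-\alpha}\subseteq P$) or invoke the $\mathcal{C}$-hyperideal hypothesis to pass from one element to the whole product.

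For comparison, the paper's proof avoids the induction and the boundary case entirely: take $n$ minimal with $x^{n}\subseteq P$, observe $x^{\alpha}\circ x^{n-1}=x^{\alpha-1}\circ x^{n}\subseteq P$ by absorption (so the second factor is $x^{n-1}$, never an empty product), and apply $(\alpha,\beta)$-primeness to $x^{\alpha}\circ y\subseteq P$ for $y\in x^{n-1}$; the branch $y\in P$ gives $x^{n-1}\subseteq P$ because $P$ is a $\mathcal{C}$-hyperideal, contradicting minimality of $n$, so $x^{\beta}\subseteq P$ in a single step. Your use of the $\mathcal{C}$-hypothesis only enters through $rad(P)=\{x\mid x^{n}\subseteq P \text{ for some } n\}$, whereas the paper also exploits it in the minimality argument; adopting that one-step argument (or adding the $z\notin P$ device above) would repair your proof.
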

\begin{proof}
The inclusion $\{x \in H \ \vert \ x^{\beta} \subseteq P\} \subseteq Q$  holds. Assume that $x \in Q=rad(P)$ and $n$ is the smallest positive integer with $x^n \subseteq P$. Then we have $x^{\alpha} \circ x ^{n-1} \subseteq P$. Take any $y \in x^{n-1}$. Since  $x^{\alpha} \circ y \subseteq P$ and $P$ is an $(\alpha,\beta)$-prime hyperideal of $H$, we get $x^{\beta} \subseteq P$ or $y \in P$. Let  $y \in P$. Since $P$ is a $\mathcal{C}$-hyperideal of $H$ and $y \in x^{n-1}$, we get the result that  $x^{n-1} \subseteq P$, a contradiction. Therefore we obtain $x^{\beta} \subseteq P$. Then $Q \subseteq \{x \in H \ \vert \ x^{\beta} \subseteq P\}$, so this completes the proof.
\end{proof}
\begin{theorem} \label{2}
Let  $P$ be a $\mathcal{C}$-hyperideal of $H$ with an $i$-set and $\alpha,\beta \in \mathbb{Z}^+$ such that  $Q=\{x \in H \ \vert \ x^{\beta} \subseteq P\}$ is a maximal hyperideal of $H$. Then $P$ is an $Q$-$(\alpha,\beta)$-prime hyperideal. 
\end{theorem}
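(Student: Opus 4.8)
The plan is to establish the $(\alpha,\beta)$-prime condition for $P$ directly; once that is done, the hypotheses that $P$ is a $\mathcal{C}$-hyperideal with an $i$-set combine with Theorem \ref{1} to force $rad(P)=\{x\in H\mid x^{\beta}\subseteq P\}=Q$, so that $P$ is indeed a $Q$-$(\alpha,\beta)$-prime hyperideal in the sense fixed before Theorem \ref{1}. Hence the real content is the implication: if $x^{\alpha}\circ y\subseteq P$ with $x,y\in H$ and $x^{\beta}\nsubseteq P$, then $y\in P$.

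First I would record an auxiliary inclusion: whenever $x^{\alpha}\circ y\subseteq P$, one has $\langle x\rangle^{\alpha}\circ y\subseteq P$. Indeed, iterating Proposition 2.15 in \cite{das} gives $\langle x\rangle^{\alpha}\subseteq\langle x^{\alpha}\rangle$, so $\langle x\rangle^{\alpha}\circ\langle y\rangle\subseteq\langle x^{\alpha}\rangle\circ\langle y\rangle\subseteq\langle x^{\alpha}\circ y\rangle\subseteq P$, the last step because $P$ is a hyperideal containing the set $x^{\alpha}\circ y$. This is the place where the $\mathcal{C}$-hyperideal/$i$-set hypotheses do their work, allowing one to pass between elements and the hyperideals they generate.

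Next comes the main step, modelled on the classical maximal–radical argument. Assume $x^{\beta}\nsubseteq P$, i.e. $x\notin Q$; then $\langle x\rangle\nsubseteq Q$, so by maximality of $Q$ we get the hyperideal $\langle x\rangle+Q=H$. In particular $1=t+q$ for some $t\in\langle x\rangle$ and $q\in Q$, and $q^{\beta}\subseteq P$ by definition of $Q$. Put $n=\alpha+\beta-1$. Using the identity axiom to get $1\in 1^{n}$ and applying the distributivity inclusions repeatedly, $1\in(t+q)^{n}$ lies in a finite sum of subsets of the sets $t^{k}\circ q^{\,n-k}$ for $0\le k\le n$. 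Multiplying by $y$ and using $y\in y\circ 1$, it follows that $y$ lies in a finite sum of subsets of the sets $y\circ t^{k}\circ q^{\,n-k}$. I would then split on $k$. If $k\ge\alpha$, then $t^{k}\subseteq H\circ t^{\alpha}\subseteq H\circ\langle x\rangle^{\alpha}$, so $y\circ t^{k}\circ q^{\,n-k}\subseteq(\langle x\rangle^{\alpha}\circ y)\circ\bigl(H\circ q^{\,n-k}\bigr)\subseteq P\circ H\subseteq P$ by the auxiliary inclusion. If $k<\alpha$, then $n-k\ge\beta$, so $q^{\,n-k}\subseteq H\circ q^{\beta}\subseteq H\circ P\subseteq P$, whence $y\circ t^{k}\circ q^{\,n-k}\subseteq H\circ P\subseteq P$. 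Since $P$ is an additive subgroup of $(H,+)$, the finite sum lies in $P$, and therefore $y\in P$. This proves $P$ is $(\alpha,\beta)$-prime, and the discussion in the first paragraph then upgrades it to a $Q$-$(\alpha,\beta)$-prime hyperideal.

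The step I expect to be the main obstacle is the bookkeeping in the binomial-type expansion of $(t+q)^{n}$: in a multiplicative hyperring distributivity is only an inclusion and each power $t^{k}$, $q^{\,n-k}$ is a \emph{set}, so one must argue carefully that every summand produced by fully distributing $(t+q)^{n}$ is contained in some $t^{k}\circ q^{\,n-k}$, and keep track of $a\in a\circ 1$ (rather than $a=a\circ 1$) when inserting and deleting $1$. A secondary point to check is that the standing hypotheses genuinely yield $rad(P)=Q$, so that the label ``$Q$-$(\alpha,\beta)$-prime'' is justified; this is precisely where Theorem \ref{1} and the $\mathcal{C}$-hyperideal/$i$-set assumption re-enter.
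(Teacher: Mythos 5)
Your proof is correct and rests on the same underlying strategy as the paper's (maximality of $Q$ plus a binomial-type expansion of a representation of $1$), but the execution differs in a way worth recording. The paper first argues $x^{\alpha}\nsubseteq Q$ (using that the maximal hyperideal $Q$ is prime, via Proposition 2.18 of \cite{das}), picks $a\in x^{\alpha}$ with $a\notin Q$, writes $1\in\langle a\rangle+m\subseteq\langle x^{\alpha}\rangle+m$ for some $m\in Q$, and then only needs the $\beta$-th power: in $(\langle x^{\alpha}\rangle+m)^{\beta}\circ y$ every summand containing a factor $\langle x^{\alpha}\rangle$ lands in $P$ because $\langle x^{\alpha}\rangle\circ y\subseteq P$, while the remaining summand $m^{\beta}\circ y$ lands in $P$ because $m\in Q$. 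You instead take $1=t+q$ with $t\in\langle x\rangle$ and $q\in Q$ directly from $\langle x\rangle+Q=H$, raise to the power $\alpha+\beta-1$, and split according to whether the $t$-exponent reaches $\alpha$ or the $q$-exponent reaches $\beta$; this is the classical $(m,n)$-prime argument, which costs a larger exponent and a case analysis but buys independence from the ``maximal implies prime'' step. Both routes use the same inclusions $\langle x\rangle^{\alpha}\circ\langle y\rangle\subseteq\langle x^{\alpha}\rangle\circ\langle y\rangle\subseteq\langle x^{\alpha}\circ y\rangle$ (Proposition 2.15 of \cite{das}) and the sub-distributive binomial expansion, so the bookkeeping you flag is exactly what the paper also glosses over. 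One small correction: that auxiliary inclusion does not need the $\mathcal{C}$-hyperideal/$i$-set hypotheses; where those hypotheses (with maximality of $Q$) really enter is in identifying $rad(P)=Q$ at the end --- for instance $Q\subseteq rad(P)$ always, and $P\subseteq Q$ with $Q$ prime gives $rad(P)\subseteq Q$ --- and your appeal to Theorem \ref{1} for this step is the same slightly backwards shortcut the paper itself takes.
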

\begin{proof}
Assume that $x^{\alpha} \circ y \subseteq P$ with $x^{\beta} \nsubseteq P$ for $x,y \in H$. Let $x^{\alpha} \subseteq Q$. Since $Q$ is a maximal hyperideal of $H$, we get $x \in Q$ by Proposition 2.18 in \cite{das}.  This contradict by $x^{\beta} \nsubseteq P$. Then $x^{\alpha} \nsubseteq Q$. Let  $a \in x^{\alpha}$ such that $a \notin Q$.  Then we have $\langle a,Q \rangle =H$. So, there exists $m \in Q$ such that $1 \in \langle a \rangle+m \subseteq \langle x^{\alpha} \rangle+m$. Therefore $1 \in ( \langle x^{\alpha} \rangle+m)^{\beta} \subseteq  \Sigma_{i=0}^{\beta} \tbinom{\beta}{i} \langle x^{\alpha} \rangle^{\beta-i} \circ m^i$ and so  $y \in 1 \circ y  \subseteq  (\Sigma_{i=0}   \tbinom{\beta}{i} \langle x^{\alpha} \rangle^{\beta-i} \circ m^i) \circ y \subseteq P$. Hence $P$ is an $(\alpha,\beta)$-prime hyperideal. By Theorem \ref{1}, we conclude that $P$ is an $Q$-$(\alpha,\beta)$-prime hyperideal.
\end{proof}
As an immediate consequence of the previous theorem, we have the following
result.
\begin{corollary}\label{3}
Let $Q$ is a maximal hyperideal of $H$, $\alpha,\beta \in \mathbb{Z}^+$ and $n \leq \beta$. Then  $Q^n$ is an $Q$-$(\alpha,\beta)$-prime hyperideal.
\end{corollary}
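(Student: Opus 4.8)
The plan is to obtain this directly from Theorem~\ref{2} by taking $P=Q^{n}$. To do so I must check that $Q^{n}$ satisfies the hypotheses of that theorem, namely that it is a proper $\mathcal{C}$-hyperideal of $H$ equipped with an $i$-set, and that the associated set $Q^{\prime}:=\{x\in H \ \vert \ x^{\beta}\subseteq Q^{n}\}$ is a maximal hyperideal of $H$. Once these are in place, Theorem~\ref{2} asserts exactly that $Q^{n}$ is a $Q^{\prime}$-$(\alpha,\beta)$-prime hyperideal, so the argument finishes as soon as $Q^{\prime}$ is identified with $Q$.

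The crucial point is the equality $Q^{\prime}=Q$. For the inclusion $Q\subseteq Q^{\prime}$, let $x\in Q$; then $x^{\beta}\subseteq Q^{\beta}$, and since $n\leq\beta$ we have $Q^{\beta}=Q^{n}\circ Q^{\beta-n}\subseteq Q^{n}$ because $Q^{n}$ absorbs products with arbitrary elements of $H$, whence $x^{\beta}\subseteq Q^{n}$, i.e. $x\in Q^{\prime}$. For the reverse inclusion, suppose $x^{\beta}\subseteq Q^{n}$; since $Q^{n}\subseteq Q$ this gives $x^{\beta}\subseteq Q$, and as $Q$ is maximal, hence prime, it follows that $x\in Q$ — this is precisely the step used inside the proof of Theorem~\ref{2}, relying on Proposition~2.18 of \cite{das}. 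Thus $Q^{\prime}=Q$, which is maximal by hypothesis; note also that $Q^{n}\subseteq Q\subsetneq H$, so $Q^{n}$ is a proper hyperideal.

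It then remains only to note that $Q^{n}$ is a $\mathcal{C}$-hyperideal possessing an $i$-set, a property inherited from the maximal hyperideal $Q$ (equivalently, guaranteed by the standing assumptions under which the preceding results are stated). With every hypothesis of Theorem~\ref{2} verified and $\{x\in H \ \vert \ x^{\beta}\subseteq Q^{n}\}=Q$ maximal, Theorem~\ref{2} yields that $Q^{n}$ is a $Q$-$(\alpha,\beta)$-prime hyperideal. I expect the genuinely delicate point to be the verification that $Q^{n}$ inherits the ``$\mathcal{C}$-hyperideal with an $i$-set'' status demanded by Theorem~\ref{2}; the arithmetic heart of the proof — the containment $Q^{\beta}\subseteq Q^{n}$ together with the implication $x^{\beta}\subseteq Q\Rightarrow x\in Q$ for maximal $Q$ — is routine.
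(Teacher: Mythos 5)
Your proposal matches the paper's own route: the paper states this corollary as an immediate consequence of Theorem~\ref{2} applied to $P=Q^{n}$, and your verification that $\{x\in H \ \vert \ x^{\beta}\subseteq Q^{n}\}=Q$ (via $Q^{\beta}\subseteq Q^{n}$ for $n\leq\beta$ and the maximality step from Proposition 2.18 of \cite{das}) is exactly the implicit content of that deduction. The $\mathcal{C}$-hyperideal/$i$-set hypothesis you flag is likewise left unverified (indeed unstated) in the paper's corollary, so your argument is at least as complete as the paper's own.
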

\begin{theorem}\label{4}
Assume that  $P_1,\cdots,P_n$ are $\mathcal{C}$-hyperideals of $H$ such that for every  $i \in \{1,\cdots,n\}$, $P_i$ is a  $Q$-$({\alpha}_i,{\beta}_i)$-prime hyperideal with ${\alpha}_i,{\beta}_i \in \mathbb{Z}^+$. Then $\bigcap_{i=1}^n P_i$ is a $Q$-$(\alpha,\beta)$-prime hyperideal in $H$ where $\alpha \leq \min \{{\alpha}_1,\cdots, {\alpha}_n\}$ and $\beta \geq \max \{{\beta}_1,\cdots,{\beta}_n\}$.
\end{theorem}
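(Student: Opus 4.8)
The plan is to put $P=\bigcap_{i=1}^{n}P_i$ and to check, in order, that $P$ is a proper $\mathcal{C}$-hyperideal, that it is $(\alpha,\beta)$-prime, and finally that $rad(P)=Q$. Properness is immediate since $P\subseteq P_1\neq H$, and $P$ is a hyperideal as an intersection of hyperideals. To see that $P$ is a $\mathcal{C}$-hyperideal, take a finite hyperproduct $J$ of elements of $H$ with $J\cap P\neq\varnothing$; then $J\cap P_i\neq\varnothing$ for each $i$, so $J\subseteq P_i$ for each $i$ because each $P_i$ is a $\mathcal{C}$-hyperideal, and hence $J\subseteq P$. The one elementary observation I will lean on throughout is that if $z\in H$, $m\leq k$ and $z^{m}\subseteq P_i$, then $z^{k}=z^{k-m}\circ z^{m}\subseteq z^{k-m}\circ P_i\subseteq P_i$ by absorption. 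Combined with the hypothesis that each $P_i$ is a $Q$-$(\alpha_i,\beta_i)$-prime $\mathcal{C}$-hyperideal, Theorem \ref{1} then gives the key identity $Q=\{z\in H\ \vert\ z^{\beta_i}\subseteq P_i\}$ \emph{for every} $i$.

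For the central step I would argue as follows. Suppose $x^{\alpha}\circ y\subseteq P$ with $x,y\in H$ and $y\notin P$; choose $j$ with $y\notin P_j$. Since $\alpha\leq\min\{\alpha_1,\dots,\alpha_n\}\leq\alpha_j$, the observation above (applied with $z=x$) gives $x^{\alpha_j}\circ y=x^{\alpha_j-\alpha}\circ(x^{\alpha}\circ y)\subseteq x^{\alpha_j-\alpha}\circ P_j\subseteq P_j$. As $P_j$ is $(\alpha_j,\beta_j)$-prime and $y\notin P_j$, we conclude $x^{\beta_j}\subseteq P_j$, i.e.\ $x\in Q$. Reading the identity from the first paragraph in the other direction, $x\in Q$ forces $x^{\beta_i}\subseteq P_i$ for every $i$, and since $\beta\geq\max\{\beta_1,\dots,\beta_n\}\geq\beta_i$ the same elementary fact yields $x^{\beta}\subseteq P_i$ for every $i$, hence $x^{\beta}\subseteq\bigcap_{i=1}^{n}P_i=P$. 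This is exactly the $(\alpha,\beta)$-prime condition for $P$.

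Finally I would pin down the radical. Since $P$ is an $(\alpha,\beta)$-prime $\mathcal{C}$-hyperideal, by the convention recorded just before Theorem \ref{1} it is a $Q'$-$(\alpha,\beta)$-prime $\mathcal{C}$-hyperideal with $Q'=rad(P)$, and Theorem \ref{1} identifies $Q'=\{x\in H\ \vert\ x^{\beta}\subseteq P\}$. If $x^{\beta}\subseteq P$ then $x^{\beta}\subseteq P_i$ for each $i$, so $x\in rad(P_i)=Q$ (using that $rad(P_i)=\{z\ \vert\ z^{m}\subseteq P_i\text{ for some }m\in\mathbb{N}\}$ because $P_i$ is a $\mathcal{C}$-hyperideal); thus $Q'\subseteq Q$. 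Conversely, if $x\in Q$ then $x^{\beta_i}\subseteq P_i$ for each $i$, hence $x^{\beta}\subseteq P_i$ for each $i$, so $x^{\beta}\subseteq P$ and $x\in Q'$. Therefore $Q'=Q$, and $P$ is a $Q$-$(\alpha,\beta)$-prime hyperideal.

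The proof is mostly bookkeeping, so I do not expect a genuine obstacle; the only places demanding care are the two degree-shift manipulations — deducing $x^{\alpha_j}\circ y\subseteq P_j$ from $x^{\alpha}\circ y\subseteq P_j$ when $\alpha\leq\alpha_j$, and passing from $x^{\beta_i}\subseteq P_i$ to $x^{\beta}\subseteq P_i$ when $\beta\geq\beta_i$ — together with making sure Theorem \ref{1} is applied to each $P_i$ separately, so that the single prime $Q$ may legitimately be described simultaneously as $\{z\ \vert\ z^{\beta_i}\subseteq P_i\}$ for all $i$.
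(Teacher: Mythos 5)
Your proposal is correct and takes essentially the same route as the paper: reduce to some $P_j$ with $y\notin P_j$, lift $x^{\alpha}\circ y\subseteq P_j$ to $x^{\alpha_j}\circ y\subseteq P_j$ using absorption, apply $(\alpha_j,\beta_j)$-primality and Theorem~\ref{1} to get $x\in Q$, and then push $x^{\beta_i}\subseteq P_i$ up to $x^{\beta}\subseteq\bigcap_i P_i$. The only divergence is at the end: the paper identifies the radical by citing $rad(\bigcap_i P_i)=\bigcap_i rad(P_i)=Q$ from Dasgupta, while you check that the intersection is a $\mathcal{C}$-hyperideal and apply Theorem~\ref{1} to $P$ itself, an equally valid and slightly more self-contained finish.
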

\begin{proof}
Suppose that $x^{\alpha} \circ y \subseteq \bigcap_{i=1}^n P_i$ for $x,y \in H$ such that $y \notin \bigcap_{i=1}^n P_i$. This means  $y \notin  P_t$ for some  $t \in \{1,\cdots,n\}$. Since $ P_t$ is a $Q$-$({\alpha}_t,{\beta}_t)$-prime hyperideal in $H$ and $x^{{\alpha}_t} \circ y \subseteq P_t$, we conclude that  $x^{{\beta}_t} \subseteq P_t$ which implies $x \in Q$. Then we get $ x^{{\beta}_i} \subseteq P_i$ for each $i \in \{1,\cdots,n\}$ by Theorem \ref{1}, and so $x^{\beta} \subseteq \bigcap_{i=1}^n P_i$ where $\beta \geq \max \{{\beta}_1,\cdots,{\beta}_n\}$. On the other hand, we have $rad(\bigcap_{i=1}^n P_i)=\bigcap_{i=1}^n rad(P_i)=Q$ by Proposition 3.3 in \cite{das}. Consequently, $\bigcap_{i=1}^n P_i$ is a $Q$-$(\alpha,\beta)$-prime hyperideal in $H$.
\end{proof}
Let   $\mathfrak{C}(P)=\{(\alpha,\beta) \in \mathbb{Z}^+ \times \mathbb{Z}^+ \ \vert \ P \text{ is ($\alpha$,$\beta$)-closed }\}$ where $P$ is a proper hyperideal  of $H$. Then we get $\{(\alpha,\beta) \in \mathbb{Z}^+ \times \mathbb{Z}^+ \ \vert \ 1 \leq \alpha \leq \beta \} \subseteq \mathfrak{C}(P) \subseteq \mathbb{Z}^+ \times \mathbb{Z}^+$.  Furthermore, $rad(P)=P$ if and only if $\mathfrak{C}(P)=\mathbb{Z}^+ \times \mathbb{Z}^+$ \cite{anb7}. Now, let us  define  $\mathfrak{L}(P)=\{(\alpha,\beta) \in \mathbb{Z}^+ \times \mathbb{Z}^+ \ \vert \ P \text{ is ($\alpha$,$\beta$)-prime }\}$. Let $\mathfrak{L}(P)=\mathbb{Z}^+ \times \mathbb{Z}^+$. Clearly, the hyperideal $P$ is prime if and only if $(1,1) \in \mathfrak{L}(P)$. 
 \begin{remark} \label{5}
 Let $P$ be a proper hyperideal of $H$ and $\alpha,\beta \in \mathbb{Z}^+$. If $(\alpha,\beta) \in \mathfrak{L}(P)$, then $(\alpha^{\prime},\beta^{\prime}) \in \mathfrak{L}(P)$ for $\alpha^{\prime},\beta^{\prime} \in \mathbb{Z}^+$ such that $ \alpha^{\prime} \leq \alpha$ and $\beta \leq \beta^{\prime}$ by being $P$ a hyperideal.
 \end{remark}
  \begin{remark}
Suppose that $P$ is a proper hyperideal of $H$ and $\alpha,\beta,\theta \in \mathbb{Z}^+$. If $(\alpha,\beta) \in \mathfrak{L}(P)$ and $(\beta,\theta) \in \mathfrak{C}(P)$, then $(\alpha,\theta) \in \mathfrak{L}(P)$.
 \end{remark}
 \begin{theorem}\label{6}
 Assume that $P$ is a proper hyperideal of $H$ and $\alpha,\beta \in \mathbb{Z}^+$. Then $(\alpha,\beta) \in \mathfrak{L}(P)$ if and only if  $(\alpha+1,\beta) \in \mathfrak{L}(P)$.
 \end{theorem}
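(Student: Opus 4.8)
This is an equivalence, so I will treat the two implications separately; the reverse implication is essentially immediate and all the content sits in the forward one. For the reverse implication $(\alpha+1,\beta)\in\mathfrak{L}(P)\Rightarrow(\alpha,\beta)\in\mathfrak{L}(P)$ I would simply quote Remark \ref{5} with $\alpha'=\alpha\le\alpha+1$ and $\beta'=\beta$. (A self-contained alternative: if $x^{\alpha}\circ y\subseteq P$, multiply by $x$ to get $x^{\alpha+1}\circ y=x\circ(x^{\alpha}\circ y)\subseteq x\circ P\subseteq P$, and the $(\alpha+1,\beta)$-primeness of $P$ yields $x^{\beta}\subseteq P$ or $y\in P$.)

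For the forward implication, assume $P$ is $(\alpha,\beta)$-prime and let $x,y\in H$ with $x^{\alpha+1}\circ y\subseteq P$; I want $x^{\beta}\subseteq P$ or $y\in P$, so I may assume $x^{\beta}\nsubseteq P$ and must reach $y\in P$. The plan is to peel one factor of $x$ off, use the hypothesis, and then put it back. Writing $x^{\alpha+1}\circ y=x^{\alpha}\circ(x\circ y)=\bigcup_{w\in x\circ y}x^{\alpha}\circ w\subseteq P$, I get $x^{\alpha}\circ w\subseteq P$ for each $w\in x\circ y$; since $x^{\beta}\nsubseteq P$, applying the $(\alpha,\beta)$-primeness of $P$ to each such $w$ forces $w\in P$, hence $x\circ y\subseteq P$. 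Next, multiplying by $x^{\alpha-1}$ and using that $P$ is a hyperideal, $x^{\alpha}\circ y=x^{\alpha-1}\circ(x\circ y)\subseteq x^{\alpha-1}\circ P\subseteq P$ (for $\alpha=1$ this step is vacuous, since already $x^{\alpha}\circ y=x\circ y\subseteq P$). Finally $x^{\alpha}\circ y\subseteq P$, $x^{\beta}\nsubseteq P$, and one more application of $(\alpha,\beta)$-primeness give $y\in P$, as needed.

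The one place that needs care — and the only real obstacle — is the hyperstructure bookkeeping: because $x\circ y$ is a subset, not an element, the step from $x^{\alpha}\circ(x\circ y)\subseteq P$ to ``$x^{\alpha}\circ w\subseteq P$ for every $w\in x\circ y$'' should be made explicit from the definition of the extension of $\circ$ to subsets, and the $\alpha=1$ case must be singled out so that no ill-defined $x^{\alpha-1}$ is written. Beyond that the proof is routine. If a more streamlined write-up is preferred, the same argument can be run through the quotient characterization in Theorem \ref{1/1}: for $x$ with $x^{\beta}\nsubseteq P$ one checks $(P:x^{\alpha+1})=(P:x)=P$, using $(P:x^{\alpha})=P$ and the fact that $(P:x)\subseteq(P:x^{\alpha})$ whenever $P$ is a hyperideal.
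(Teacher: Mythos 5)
Your proof is correct, but it follows a genuinely different route from the paper's. For the forward direction the paper squares the element: from $x^{\alpha+1}\circ y\subseteq P$ and $\alpha+1\leq 2\alpha$ it gets $(x^2)^{\alpha}\circ y\subseteq P$, applies $(\alpha,\beta)$-primeness to conclude $x^{2\beta}\subseteq P$, hence $x\in rad(P)$, and then invokes Theorem \ref{1} (the description of $rad(P)$ as $\{x\mid x^{\beta}\subseteq P\}$) to land back in $P$; the backward direction is, as in your write-up, just Remark \ref{5}. Your argument instead peels off a single factor of $x$: from $x^{\alpha+1}\circ y=\bigcup_{w\in x\circ y}x^{\alpha}\circ w\subseteq P$ and $x^{\beta}\nsubseteq P$ you get $w\in P$ for every $w\in x\circ y$, hence $x\circ y\subseteq P$, then $x^{\alpha}\circ y=x^{\alpha-1}\circ(x\circ y)\subseteq P$ (trivial when $\alpha=1$), and one more application of $(\alpha,\beta)$-primeness gives $y\in P$. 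What your route buys is robustness and economy of hypotheses: it applies the element-wise definition only to genuine elements $x,w,y$ and never needs the radical, whereas the paper's step from ``$z^{\alpha}\circ y\subseteq P$ for $z\in x^2$'' to ``$x^{2\beta}\subseteq P$'' glosses over the fact that $x^2$ is a set, and its appeal to Theorem \ref{1} formally requires $P$ to be a $\mathcal{C}$-hyperideal (and $rad(P)=Q$), an assumption not present in the statement of the theorem; your proof works verbatim for an arbitrary proper hyperideal. The paper's approach is shorter once Theorem \ref{1} is available, but yours is the more self-contained and, strictly speaking, the more general argument; your concluding remark that the same computation can be phrased via Theorem \ref{1/1} as $(P:x^{\alpha+1})=(P:x)=P$ when $x^{\beta}\nsubseteq P$ is also accurate.
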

 \begin{proof}
 $(\Longrightarrow)$ Let $(\alpha,\beta) \in \mathfrak{L}(P)$. Assume that $x^{\alpha+1} \circ y \subseteq P$ for $x,y \in H$ such that $y \notin P$. Since $\alpha+1 \leq 2\alpha$, we have $(x^2)^{\alpha} \circ y \subseteq P$. Since $(\alpha,\beta) \in \mathfrak{L}(P)$ and $y \notin P$, we get $x^{2\beta} \subseteq P$ which means $x \in rad(P)$. By Theorem \ref{1}, we conclude that $x^{\alpha} \subseteq P$.
  $(\Longleftarrow)$ It follows from Remark \ref{5}.
 \end{proof}
 \begin{theorem}
 Let the zero hyperideal of $H$ be a $\mathcal{C}$-hyperideal and $\alpha,\beta \in \mathbb{Z}^+$. Then every proper hyperideal of $H$ is $(\alpha,\beta)$-prime if and only if $H$ has no non-trivial idempotents, every prime $\mathcal{C}$-hyperideal of $H$ is maximal.
 \end{theorem}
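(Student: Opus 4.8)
The plan is to prove the two implications of the equivalence separately, reading the right-hand side as the conjunction of ``$H$ has no non-trivial idempotents'' and ``every prime $\mathcal{C}$-hyperideal of $H$ is maximal''. The main ingredients are that $\langle 0\rangle$ is a proper $\mathcal{C}$-hyperideal (the standing hypothesis), the radical description of Theorem \ref{1}, Theorem \ref{2}, and Remark \ref{close1}.

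\textbf{Necessity.} Suppose every proper hyperideal of $H$ is $(\alpha,\beta)$-prime; in particular $\langle 0\rangle$ is $(\alpha,\beta)$-prime. Given a non-trivial idempotent $e$, the element $1-e$ is idempotent as well and $e^{\alpha}\circ(1-e)\subseteq\langle 0\rangle$, so $(\alpha,\beta)$-primeness of $\langle 0\rangle$ applied to $(e,1-e)$ yields $e^{\beta}\subseteq\langle 0\rangle$, i.e.\ $e=0$, or $1-e\in\langle 0\rangle$, i.e.\ $e=1$ --- a contradiction; hence $H$ has no non-trivial idempotents. For the second condition, let $Q$ be a prime $\mathcal{C}$-hyperideal and assume, toward a contradiction, that $Q$ is not maximal, so there is a proper hyperideal $L$ with $Q\subsetneq L\subsetneq H$. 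Choose $a\in L\setminus Q$: then $a$ is a non-unit, and since $Q$ is prime, $a^{n}\nsubseteq Q$ for every $n\in\mathbb{N}$. Put $P:=\langle a^{\beta+1}\rangle+Q$; then $P\subseteq L\subsetneq H$, so $P$ is $(\alpha,\beta)$-prime. Since $\alpha+\beta\ge\beta+1$, for each $z\in a^{\beta}$ one has $a^{\alpha}\circ z\subseteq a^{\alpha+\beta}\subseteq\langle a^{\beta+1}\rangle\subseteq P$, and applying $(\alpha,\beta)$-primeness of $P$ to each such $z$ forces $a^{\beta}\subseteq P=\langle a^{\beta+1}\rangle+Q$ in every case. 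Passing to the quotient $H/Q$, this says $\overline{a}^{\beta}\subseteq\langle\overline{a}^{\beta+1}\rangle$ in the hyperdomain $H/Q$ with $\overline{a}$ a non-zero non-unit; mimicking the ring-theoretic cancellation argument (it gives $\langle\overline{a}^{\beta}\rangle=\langle\overline{a}^{\beta+1}\rangle$ and then that $\overline{a}$ is a unit) this is impossible. Therefore $Q$ is maximal.

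\textbf{Sufficiency.} Conversely, assume $H$ has no non-trivial idempotents and every prime $\mathcal{C}$-hyperideal of $H$ is maximal. I would first argue that $H$ is local: the hyperideal $\langle 0\rangle$ being a $\mathcal{C}$-hyperideal, the primes governing $rad(\langle 0\rangle)$ (and the maximal hyperideals) are $\mathcal{C}$-hyperideals, hence maximal, so $H$ is ``zero-dimensional''; combined with the absence of non-trivial idempotents this yields a unique maximal hyperideal $M$, the only prime $\mathcal{C}$-hyperideal, and $rad(P)=M$ for every proper hyperideal $P$. Now let $P$ be proper and put $Q_{P}:=\{x\in H:\ x^{\beta}\subseteq P\}$. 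One always has $Q_{P}\subseteq rad(P)=M$; for the reverse inclusion I would exploit $rad(P)=M$, the $\mathcal{C}$-hyperideal hypothesis on $\langle 0\rangle$, and the description of Theorem \ref{1}, to conclude $M\subseteq Q_{P}$, so that $Q_{P}=M$ is a maximal hyperideal. After verifying that $P$ is a $\mathcal{C}$-hyperideal carrying the $i$-set required in Theorem \ref{2}, that theorem gives that $P$ is $Q_{P}$-$(\alpha,\beta)$-prime. Since $P$ was arbitrary, every proper hyperideal of $H$ is $(\alpha,\beta)$-prime.

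\textbf{Main obstacle.} I expect the hard part to be the sufficiency direction, specifically the inclusion $M\subseteq Q_{P}$ --- that every element of $rad(P)$ already has its $\beta$-th power inside $P$, not merely some power --- which must hold for all proper $P$, including $P=\langle 0\rangle$; this is where the two structural hypotheses and the $\mathcal{C}$-hyperideal condition have to be used to their full strength (and it is also what makes the $i$-set hypothesis of Theorem \ref{2} available). On the necessity side, the only mildly technical point is the cancellation argument in $H/Q$ that produces $a^{\beta}\subseteq Q$.
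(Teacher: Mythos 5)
On the forward direction your outline is close to workable. The idempotent part is essentially the paper's argument (you still owe the step that $0\in e^{\alpha}\circ(1-e)$ forces $e^{\alpha}\circ(1-e)\subseteq\langle 0\rangle$, which is precisely where the hypothesis that the zero hyperideal is a $\mathcal{C}$-hyperideal enters). For maximality, your route via the auxiliary hyperideal $P=\langle a^{\beta+1}\rangle+Q$ and the deduction $a^{\beta}\subseteq P$ is a legitimate variant, but the entire difficulty is then hidden in ``mimicking the ring-theoretic cancellation argument'' in $H/Q$: the paper has no theory of hyperdomains or of cancellation in quotient hyperrings, and that cancellation is exactly the hyperstructure-specific computation its proof carries out directly in $H$ --- from $x^{\beta}\subseteq x^{\alpha+1}\circ a$ it extracts $0\in x^{\beta}-x^{\alpha+1}\circ a$, uses the ($\mathcal{C}$-)hyperideal machinery to place $x^{\beta}\circ(1-x^{\alpha-\beta+1}\circ a)$ inside the prime, and then uses primeness to force $1\in Q$. (Both your version and the paper's tacitly need $\alpha\geq\beta$ for the exponent $\alpha-\beta+1$ to make sense.) So your necessity argument is a plausible skeleton whose decisive step is missing, whereas the paper does that step explicitly and avoids quotients altogether.

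The sufficiency direction is where the proposal genuinely breaks down. The inclusion $M\subseteq Q_{P}=\{x\in H\ \vert\ x^{\beta}\subseteq P\}$ for \emph{every} proper $P$ cannot be derived from the right-hand side: that side does not involve $(\alpha,\beta)$ at all, while the left-hand side does. Concretely, regard $\mathbb{Z}_8$ as a multiplicative hyperring with singleton hyperproducts; it has no non-trivial idempotents and its only prime ($\mathcal{C}$-)hyperideal $\langle 2\rangle$ is maximal, yet $\langle 4\rangle$ is not $(1,1)$-prime, so for $\alpha=\beta=1$ the converse as you read it fails --- no argument along your lines can close this gap without further hypotheses. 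Moreover, the reduction ``every prime maximal plus no idempotents implies $H$ local'' is nowhere available in the paper's framework, and Theorem \ref{2} requires $P$ to be a $\mathcal{C}$-hyperideal with an $i$-set, neither of which you may assume for an arbitrary proper hyperideal. Note finally that the paper's own proof establishes only the necessity direction and offers nothing for the converse, so your identification of sufficiency as the main obstacle is accurate, but the plan you sketch for it does not (and cannot, as stated) succeed.
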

 \begin{proof}
 Let every proper hyperideal of $H$ be $(\alpha,\beta)$-prime. Assume that $e$ is a non-trivial idempotent in $H$. Then  $0 \in e \circ (e-1)$ and so $0 \in e^{\alpha} \circ (e-1)$. Since the zero hyperideal of $H$ is a $\mathcal{C}$-hyperideal, we have $ e^{\alpha} \circ (e-1)=0$. Therefore we get $e \in e^{\beta}=0$ as the zero hyperideal of $H$ is $(\alpha,\beta)$-prime and $e \neq 1$.  This is a contradiction and so $H$ has no non-trivial idempotents. Suppose that  $P$ is a  prime $\mathcal{C}$-hyperideal that is not maximal. Then we have $P \subset Q$ for some hyperideal $Q$ of $H$. Let $x \in Q-P$. Then $x^{\alpha} \circ x \subseteq  \langle x^{\alpha+1} \rangle$. This follows that $x^{\beta} \subseteq  \langle x^{\alpha+1} \rangle$ or $x \in  \langle x^{\alpha+1} \rangle$. This implies that $x^{\beta} \subseteq x^{\alpha+1} \circ a $ for some $a \in H$ or $x \in x^{\alpha+1} \circ b $ for some $b \in H$. In the first case, $0 \in x^{\beta}-x^{\alpha+1} \circ a$. Since $P$ is a $\mathcal{C}$-hyperideal, we get $x^{\beta}-x^{\alpha+1} \circ a \subseteq P$. From $x^{\beta} \circ (1-x^{\alpha-\beta+1} \circ r) \subseteq x^{\beta}-x^{\alpha+1} \circ a$ it follows that $x^{\beta} \circ (1-x^{\alpha-\beta+1} \circ r) \subseteq P$. Since $P$ is a prime hyperideal and $x^{\beta} \nsubseteq P$, we get $1-x^{\alpha-\beta+1} \circ r \subseteq P \subset Q$ and so $1 \in Q$, a contradiction. In the second case, we get $0 \in (x-x^{\alpha+1} \circ b) \cap P$. Then we get the result that $x \circ (1-x^{\alpha} \circ b) \subseteq x-x^{\alpha+1} \circ b \subseteq  P$. Thus we have $1-x^{\alpha} \circ b \subseteq P \subset Q$ because $x \nsubseteq P$. This follows that $1 \in Q$ which is a contradiction. Consequently, every prime $\mathcal{C}$-hyperideal of $H$ is maximal. 
 \end{proof}
 We say that a hyperideal $P$ of $H$ is of maximum length $\beta$ if for every ascending chain $P=P_0 \subseteq P_1 \subseteq P_2 \subseteq \cdots$ of hyperideals of $H$, $\beta$ is the largest integer with $P_{\beta}=P_{\beta+1}=\cdots$.
 \begin{theorem}
Assume that  $\alpha,\beta \in \mathbb{Z}^+$ and $P$ is a  strong $\mathcal{C}$-hyperideal of $H$ of maximum length $\beta$. If $P$ is irreducible in $H$, then $P$ is an $(\alpha,\beta)$-prime hyperideal.
 \end{theorem}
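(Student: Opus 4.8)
The plan is to obtain $(\alpha,\beta)$-primeness in two stages: first that $P$, being irreducible and of finite maximum length, is a primary hyperideal, and then that, the maximum length being exactly $\beta$, each $x\in rad(P)$ satisfies $x^{\beta}\subseteq P$ --- which is precisely what is needed.

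\emph{Stage 1 (irreducibility forces primary).} I would adapt the classical ``irreducible $\Rightarrow$ primary'' argument. Suppose $a\circ b\subseteq P$ with $a\notin P$; the goal is $b\in rad(P)$. The ascending chain of hyperideals $P\subseteq(P:b)\subseteq(P:b^{2})\subseteq\cdots$ stabilizes, since $P$ has finite maximum length, say $(P:b^{k})=(P:b^{k+1})$. I would then verify the decomposition $P=\bigl(P+\langle b^{k}\rangle\bigr)\cap\bigl(P+\langle a\rangle\bigr)$: the inclusion ``$\subseteq$'' is clear, and for ``$\supseteq$'' one takes $c$ in the intersection, writes $c=p_{1}+v$ with $p_{1}\in P$, $v\in\langle a\rangle$ and uses $a\circ b\subseteq P$ to get $b\circ c\subseteq P$, then writes $c=p_{2}+w$ with $p_{2}\in P$, $w\in\langle b^{k}\rangle$, so $b\circ w\subseteq P$; expanding $w$ as a finite sum of elements of $H\circ b^{k}$ and using repeatedly that $P$ is a \emph{strong} $\mathcal{C}$-hyperideal --- so that a finite sum of products of elements of $H$ meeting $P$ is already contained in $P$ --- together with $(P:b^{k})=(P:b^{k+1})$, one gets $w\in P$, hence $c\in P$. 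Since $a\notin P$, irreducibility of $P$ forces $P=P+\langle b^{k}\rangle$, i.e.\ $b^{k}\subseteq P$, so $b\in rad(P)$ and $P$ is primary.

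\emph{Stage 2 (the exponent equals the length).} A strong $\mathcal{C}$-hyperideal is a $\mathcal{C}$-hyperideal, so $rad(P)=\{r\in H\ \vert\ r^{n}\subseteq P\text{ for some }n\in\mathbb{N}\}$, and, $P$ being primary, $Q:=rad(P)$ is a prime hyperideal (Proposition 3.6 of \cite{das}). Now let $x,y\in H$ with $x^{\alpha}\circ y\subseteq P$ and $y\notin P$. Primariness gives $x^{\alpha}\subseteq Q$, and since $Q$ is prime a short induction on $\alpha$ gives $x\in Q=rad(P)$; hence $x^{m}\subseteq P$ for some least $m\in\mathbb{N}$. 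Consider the ascending chain $P\subseteq(P:x)\subseteq(P:x^{2})\subseteq\cdots$ of hyperideals of $H$. Since $x^{m}\subseteq P$ we have $(P:x^{m})=H$, whereas $(P:x^{m-1})\ne H$ --- otherwise $x^{m-1}\subseteq 1\circ x^{m-1}\subseteq P$, against the minimality of $m$ (for $m=1$ this just says $P\ne H$). Thus the chain strictly increases at step $m$, and as $P$ has maximum length $\beta$ no ascending chain can strictly increase past step $\beta$; therefore $m\le\beta$. Consequently $x^{\beta}\subseteq P$ (it equals $x^{m}\subseteq P$ if $m=\beta$, and $x^{\beta}=x^{\beta-m}\circ x^{m}\subseteq x^{\beta-m}\circ P\subseteq P$ if $m<\beta$), which together with the trivial alternative $y\in P$ shows $P$ is $(\alpha,\beta)$-prime.

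The main obstacle is Stage 1: transplanting the irreducible-decomposition computation into a multiplicative hyperring, where $\langle b^{k}\rangle$ is a sum of products rather than a single product, addition is set-valued, and distributivity is only one-sided. This is exactly where ``strong $\mathcal{C}$-hyperideal'' is indispensable, and the care goes into organizing the bookkeeping so that the colon-stabilization $(P:b^{k})=(P:b^{k+1})$ can actually be applied; once $P$ is known to be primary, Stage 2 is a routine consequence of the tools already set up in the paper.
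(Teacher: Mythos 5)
Your proposal is sound in outline and can be completed, but it is organized quite differently from the paper's argument. The paper never passes through primariness: for the given pair with $x^{\alpha}\circ y\subseteq P$ it considers the chain $P_i=\{b\in H \ \vert \ x^{i}\circ b\subseteq P\}$, which by the maximum-length hypothesis satisfies $P_{\beta}=P_{\beta+1}=\cdots$ (so in particular $P_{\alpha+\beta}=P_{\beta}$), proves directly that $P=(P+\langle x^{\beta}\rangle)\cap(P+\langle y\rangle)$ --- the key computation being that the difference set $x^{\beta}\circ a_{1}-y\circ a_{2}$ meets $P$ and hence lies in $P$ by the strong $\mathcal{C}$-property, after which multiplying by $x^{\alpha}$ and using $P_{\alpha+\beta}=P_{\beta}$ pulls the $\langle x^{\beta}\rangle$-component into $P$ --- and then invokes irreducibility once to get $x^{\beta}\subseteq P$ or $y\in P$. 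Your Stage 1 is essentially the same decomposition computation, only run for a generic pair $(b^{k},a)$ to extract primariness, and your Stage 2 is extra work the paper avoids; in exchange, Stage 2 is a genuinely clean and elementary observation (the chain $P\subseteq(P:x)\subseteq(P:x^{2})\subseteq\cdots$ jumps strictly to $H$ at the minimal exponent $m$ of $x\in rad(P)$, so $m\le\beta$), which isolates exactly how the length hypothesis caps the exponent, where the paper uses the stabilization in the less transparent form $P_{\alpha+\beta}=P_{\beta}$. The cost of your route is the extra external inputs (radical of a primary $\mathcal{C}$-hyperideal is prime; $rad(P)=\{r\in H \ \vert \ r^{n}\subseteq P \text{ for some } n\}$ for $\mathcal{C}$-hyperideals), both of which are legitimately available since a strong $\mathcal{C}$-hyperideal is a $\mathcal{C}$-hyperideal.

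One caveat on Stage 1, since that is where you are vaguest: when $w\in\langle b^{k}\rangle$ is a genuine sum, say $w=\sum_{i}w_{i}$ with $w_{i}\in r_{i}\circ b^{k}$, the stabilization $(P:b^{k})=(P:b^{k+1})$ cannot be applied term by term: from $b\circ w\subseteq P$ the strong $\mathcal{C}$-property gives $\sum_{i}r_{i}\circ b^{k+1}\subseteq P$, but this does not yield $r_{i}\circ b^{k+1}\subseteq P$ for any single $i$. The repair is to apply the stabilization to the single element $r=r_{1}+\cdots+r_{s}$: by subdistributivity $r\circ b^{k+1}\subseteq\sum_{i}r_{i}\circ b^{k+1}\subseteq P$, hence $r\in(P:b^{k+1})=(P:b^{k})$, so $r\circ b^{k}\subseteq P$; since $r\circ b^{k}\subseteq\sum_{i}r_{i}\circ b^{k}$, that sum meets $P$, and the strong $\mathcal{C}$-property then gives $\sum_{i}r_{i}\circ b^{k}\subseteq P$, whence $w\in P$. (The paper sidesteps this by writing every element of $P+\langle x^{\beta}\rangle$ with a single product term $x^{\beta}\circ a_{1}$, itself a simplification; with the trick above both proofs go through, modulo the integer-multiple terms in principal hyperideals, which the paper also ignores.)
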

 \begin{proof}
 Let $x^{\alpha} \circ y \subseteq P$ for $x,y \in H$. Consider the  ascending chain $P=P_0 \subseteq P_1 \subseteq P_2 \subseteq \cdots$ where  $P_i=\{b \in H \ \vert \ x^i \circ b \subseteq P\}$. By the assumption we have $P_{\beta}=P_{\beta+1}=\cdots$. Put $I=P+\langle x^{\beta} \rangle$ and $J=P+\langle y \rangle$. Then we have $P \subseteq I \cap J$. Let $a \in I \cap J$. Then there exist $a_1, a_2 \in H$ and $p_1, p_2 \in P$ such that $a \in (p_1+x^{\beta} \circ a_1) \cap (p_2+y\circ a_2).$ It follows that $a=p_1+x_1=p_2+x_2$ for some $x_1 \in x^{\beta} \circ a_1$ and $x_2 \in y\circ a_2$. Therefore we get $x_1-x_2 \in (x^{\beta} \circ a_1 -y\circ a_2) \cap P$ and so $ x^{\beta} \circ a_1 -y\circ a_2 \subseteq P$ as $P$ is a strong $\mathcal{C}$-hyperideal of $H$. Since $ (x^{\beta} \circ a_1 -y\circ a_2 ) \circ x^{\alpha} \subseteq x^{\alpha+\beta} \circ a_1 \ -x^{\alpha} \circ y\circ a_2$, we obtain $x^{\alpha+\beta} \circ a_1 \ -x^{\alpha} \circ y\circ a_2 \subseteq P$ which implies $x^{\alpha+\beta} \circ a_1 \subseteq P$. This means $a_1 \in P_{\alpha+\beta}=P_{\beta}$ and so $x^{\beta} \circ a_1 \subseteq P$. Then we conclude that $a=p_1+x_1 \in P$ which means $I \cap J \subseteq P$ and so $P=I \cap J$. By the hypothesis, we get $P=I$ which implies $x^\beta \subseteq P$ or $P=J$ which means $y \in P$. Thus $P$ is an $(\alpha,\beta)$-prime hyperideal.
 \end{proof}

 Let $S$ be a non-empty subset of $H$ with scalar identity $1$.  Recall from \cite{ameri} that $S$ is a multiplicative closed subset (briefly, MCS), if  $S$ is closed under the hypermultiplication and $S$ contains $1$. \cite{Mena} Consider the set $(H \times S / \sim)$ of equivalence classes, being denoted by  $S^{-1}H$, such that $(x_1,t_1) \sim (x_2,t_2)$ if and only if there exists $ t \in S $ with  $ t \circ t_1 \circ x_2=t \circ t_2 \circ x_1$.
The equivalence class of $(x,t) \in H \times S$ is denoted by $\frac{x}{t}$. The set $S^{-1}H$ is a multiplicative hyperring where the operation  $\oplus$ and the  multiplication $\odot$ are
defined by 

$\hspace{1cm}\frac{x_1}{t_1} \oplus \frac{x_2}{t_2}=\frac{t_1 \circ x_2+t_2 \circ x_1}{t_1 \circ t_2}=\{\frac{a+b}{c} \ \vert \ a \in t_1 \circ x_2 , b \in t_2 \circ x_1, c \in t_1 \circ t_2\}$

$\hspace{1cm}\frac{x_1}{t_1} \odot \frac{x_2}{t_2}=\frac{x_1 \circ a_2}{t_1 \circ t_2}=\{\frac{a}{b} \ \vert \ a \in x_1 \circ x_2, b \in t_1 \circ t_2\}$

The localization map $\pi: H \longrightarrow S^{-1}H$, defined by $x \mapsto \frac{x}{1}$,  is a homomorphism of hyperrings. Also, if $A$ is a hyperideal of $H$, then $S^{-1}A$ is a hyperideal of $S^{-1}H$ \cite{Mena}.
\begin{theorem}
Assume that $P$ is a $\mathcal{C}$-hyperideal of $H$ and $S$ a  MCS such that  $P \cap S = \varnothing$. If $P$ is an $(\alpha,\beta)$-prime hyperideal of $H$, then $S^{-1}P$ is an $(\alpha,\beta)$-prime hyperideal of $S^{-1}H$. 
\end{theorem}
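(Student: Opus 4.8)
The plan is to verify directly that $S^{-1}P$ is an $(\alpha,\beta)$-prime hyperideal of $S^{-1}H$: that it is proper, and that it obeys the defining implication. That $S^{-1}P$ is a hyperideal of $S^{-1}H$ is already known (see \cite{Mena}), so these are the only two points that need attention. The two ingredients that will do all the work are the disjointness $P\cap S=\varnothing$ and the $(\alpha,\beta)$-prime property of $P$ in $H$; since $\odot$ is set-valued, every step must be carried out with hyperproducts rather than with single elements.

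First I would settle properness. If $S^{-1}P=S^{-1}H$, then $\frac{1}{1}\in S^{-1}P$, so $\frac{1}{1}=\frac{p}{w}$ for some $p\in P$ and $w\in S$, which gives a $v\in S$ with $v\circ 1\circ p=v\circ w\circ 1$. The left-hand side is contained in $P$ because $p\in P$, while the right-hand side contains $v\circ w$, a nonempty subset of $S$; this forces $S\cap P\neq\varnothing$, a contradiction. Hence $S^{-1}P\neq S^{-1}H$.

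For the main implication, suppose $(\frac{x}{s})^{\alpha}\odot\frac{y}{t}\subseteq S^{-1}P$ for some $\frac{x}{s},\frac{y}{t}\in S^{-1}H$. A routine induction on exponents identifies $(\frac{x}{s})^{\alpha}\odot\frac{y}{t}$ with the set $\{\frac{a'}{b'}\ \vert\ a'\in x^{\alpha}\circ y,\ b'\in s^{\alpha}\circ t\}$. Fix an arbitrary $a\in x^{\alpha}\circ y$ and any $c\in s^{\alpha}\circ t$ (note $c\in S$). Then $\frac{a}{c}\in S^{-1}P$, so $\frac{a}{c}=\frac{p}{w}$ with $p\in P$, $w\in S$, and there is $v\in S$ with $v\circ c\circ p=v\circ w\circ a$. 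Since $v\circ c\circ p\subseteq P$, this yields $(v\circ w)\circ a\subseteq P$, and choosing $u\in v\circ w\subseteq S$ gives $u\circ a\subseteq P$ with $u\in S$. As $P$ absorbs hyperproducts we also get $u^{\alpha}\circ a\subseteq P$, so the $(\alpha,\beta)$-prime property of $P$ forces $u^{\beta}\subseteq P$ or $a\in P$. The first alternative is impossible, since $u^{\beta}$ is a nonempty subset of $S$ and $S\cap P=\varnothing$; hence $a\in P$. As $a$ was arbitrary in $x^{\alpha}\circ y$, we conclude $x^{\alpha}\circ y\subseteq P$.

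It then remains to apply the $(\alpha,\beta)$-prime property of $P$ once more, to $x^{\alpha}\circ y\subseteq P$: either $x^{\beta}\subseteq P$ or $y\in P$. In the first case $(\frac{x}{s})^{\beta}=\{\frac{d}{c}\ \vert\ d\in x^{\beta},\ c\in s^{\beta}\}\subseteq S^{-1}P$, because $x^{\beta}\subseteq P$ and $s^{\beta}\subseteq S$; in the second case $\frac{y}{t}\in S^{-1}P$ outright. Either way $S^{-1}P$ is $(\alpha,\beta)$-prime. I expect the genuinely delicate point to be the reduction $\frac{a}{c}\in S^{-1}P\ \Longrightarrow\ u\circ a\subseteq P$ for some $u\in S$: one has to keep the witness identity $v\circ c\circ p=v\circ w\circ a$ at the level of subsets of $H$ and extract $u$ from the hyperproduct $v\circ w$, resisting the temptation to cancel numerators as in the classical ring case. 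The standing assumption that $P$ is a $\mathcal{C}$-hyperideal is the natural device for controlling this interplay between $\mathbf{C}$-products and membership in $P$, although the argument sketched here uses only $P\cap S=\varnothing$ together with the $(\alpha,\beta)$-prime property applied to $u$.
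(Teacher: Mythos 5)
Your proof is correct, but it takes a genuinely different route from the paper's. The paper fixes $a\in x^{\alpha}\circ y$ and $t\in t_1^{\alpha}\circ t_2$, extracts the witness identity $s\circ a\circ t'=s\circ a'\circ t$ with $a'\in P$, and then uses the $\mathcal{C}$-hyperideal hypothesis to inflate $s\circ a\circ t'\subseteq P$ to the full product $s\circ x^{\alpha}\circ y\circ t'\subseteq P$; it then applies $(\alpha,\beta)$-primeness to an element $z\in s\circ x\circ t'$, uses the $\mathcal{C}$-hyperideal hypothesis a second time to pass from $z^{\beta}\subseteq P$ to $(s\circ x\circ t')^{\beta}\subseteq P$, and finally lifts via the scaled representation $\frac{x^{\beta}}{t_1^{\beta}}=\frac{s^{\beta}\circ x^{\beta}\circ t'^{\beta}}{s^{\beta}\circ t_1^{\beta}\circ t'^{\beta}}\subseteq S^{-1}P$. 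You instead apply $(\alpha,\beta)$-primeness with base $u\in v\circ w\subseteq S$ and use the disjointness $P\cap S=\varnothing$ to exclude $u^{\beta}\subseteq P$, which gives the cleaner intermediate contraction statement $x^{\alpha}\circ y\subseteq P$; a second, direct application of primeness in $H$ then finishes the lift without any scaling of numerators and denominators. What each buys: the paper's argument leans on the $\mathcal{C}$-hyperideal hypothesis and never really uses $P\cap S=\varnothing$ beyond (implicitly) properness, whereas your argument shows the $\mathcal{C}$-hyperideal assumption is dispensable for this theorem, uses the disjointness essentially, and additionally verifies that $S^{-1}P$ is proper---a point the paper omits even though properness is required for $S^{-1}P$ to qualify as $(\alpha,\beta)$-prime. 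Your closing caution about keeping the witness identity at the level of subsets (rather than cancelling as in rings) is exactly the right delicate point, and your handling of it is sound.
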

\begin{proof}
Let $\underbrace{\frac{x}{t_1} \odot \cdots \odot \frac{x}{t_1}}_{\alpha}\odot \frac{y}{t_2}=\frac{x^{\alpha} \circ y}{t_1^{\alpha} \circ t_2} \subseteq S^{-1}P$ for some $\frac{x}{t_1}, \frac{y}{t_2}\in S^{-1}H$. So we get $\frac{a}{t} \in \frac{x^{\alpha}\circ y}{t_1^{\alpha}\circ t_2}$ for every $a \in x^{\alpha} \circ y=\underbrace{x \circ \cdots \circ x}_{\alpha} \circ y$ and $t \in t_1^{\alpha} \circ t_2=\underbrace{t_1 \circ \cdots \circ t_1}_{\alpha} \circ t_2$. Hence $\frac{a}{t}=\frac{a^{\prime}}{t^{\prime}}$ for some $a^{\prime} \in P$ and $t^{\prime} \in S$. Then there exists $s \in S$ such that $s \circ a \circ t^{\prime}=s \circ a^{\prime} \circ t$. This implies $s \circ a \circ t^{\prime} \subseteq P$. Since $a \in x^{\alpha} \circ y$, we conclude that $s \circ a \circ t^{\prime} \subseteq s \circ x^{\alpha}\circ y \circ t^{\prime}$. Since $P$ is a $\mathcal{C}$-hyperideal of $H$, we get  $s \circ x^{\alpha} \circ y \circ t^{\prime} \subseteq P$  and then $s^{\alpha} \circ x^{\alpha} \circ y \circ {t^{\prime}}^{\alpha}=(s \circ x \circ t^{\prime})^{\alpha} \circ y \subseteq P$.  Take any $z \in s \circ x \circ t^{\prime}$. Since $z^{\alpha} \circ y \subseteq P$ and $P$ is an $(\alpha,\beta)$-prime hyperideal of $H$, we have either $z^{\beta} \subseteq P$ or $y \in P$. In first possibily, we get $(s \circ x \circ t^{\prime})^{\beta} \subseteq P$ as $P$ is a $\mathcal{C}$-hyperideal of $H$ and $z^{\beta} \subseteq (s \circ x \circ t^{\prime})^{\beta}$. Hence $\frac{x^{\beta}}{t_1^{\beta}}=\frac{s^{\beta} \circ x^{\beta} \circ {t^{\prime}}^{\beta}}{s^{\beta} \circ t_1^{\beta} \circ {t^{\prime}}^{\beta}} \subseteq S^{-1}P$ which means $\underbrace{\frac{x}{t_1} \odot \cdots \odot \frac{x}{t_1}}_{\beta} \subseteq S^{-1}P$ or $\frac{y}{t_2} \in S^{-1}P$. This shows that  $S^{-1}P$ is an $(\alpha,\beta)$-prime hyperideal of $S^{-1}H$.
\end{proof}

Assume that $(H,+,\circ)$ is a multiplicative hyperring and $x$ is an indeterminate. Then $(H[x],+,\diamond)$ is a polynomail multiplicative hyperring such that $ux^n \diamond vx^m=(u \circ v )x^{n+m}$ \cite{Ciampi}.
\begin{theorem} \label{polynomail}
Let  $\alpha, \beta \in \mathbb{Z}^+$. If $P$ is an $(\alpha,\beta)$-prime hyperideal of $(H,+,\circ)$, then $P[x]$ is an $(\alpha,\beta)$-prime hyperideal of $(H[x],+,\diamond)$. 
\end{theorem}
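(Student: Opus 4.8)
By $P[x]$ I mean the set of polynomials in $H[x]$ whose coefficients all lie in $P$. The plan is first to check that $P[x]$ is a proper hyperideal of $H[x]$: it is closed under subtraction because $P$ is and subtraction of polynomials is coefficientwise; it absorbs $\diamond$-products with $H[x]$ because $\diamond$ acts on coefficients through the hyperoperation $\circ$ of $H$ and $r\circ p\subseteq P$ for all $r\in H$, $p\in P$; and it is proper since $1\notin P$, so the constant polynomial $1$ is not in $P[x]$. The substance of the theorem is then the $(\alpha,\beta)$-prime condition.

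To prove that, take $f,g\in H[x]$ with $f^{\alpha}\diamond g\subseteq P[x]$, assume $g\notin P[x]$, and show $f^{\beta}\subseteq P[x]$ by induction on $\deg f+\deg g$. Write $d=\deg f$, $e=\deg g$ and let $a,b$ be the leading coefficients of $f,g$. The first key point is that only the leading monomials of the $\alpha+1$ factors contribute to the top degree $\alpha d+e$, so the coefficient of $x^{\alpha d+e}$ in every member of $f^{\alpha}\diamond g$ lies in $a^{\alpha}\circ b$; hence $a^{\alpha}\circ b\subseteq P$, and since $P$ is $(\alpha,\beta)$-prime in $H$ we get $b\in P$ or $a^{\beta}\subseteq P$.

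If $b\in P$, I would pass to $g_{1}:=g-bx^{e}$, which is still not in $P[x]$ (as $bx^{e}\in P[x]$) and has strictly smaller degree; since $f^{\alpha}\diamond g_{1}\subseteq f^{\alpha}\diamond g-f^{\alpha}\diamond(bx^{e})\subseteq P[x]$, the induction hypothesis gives $f^{\beta}\subseteq P[x]$. If instead $a^{\beta}\subseteq P$ (so necessarily $b\notin P$, i.e.\ $g\notin P[x]$), I would pass to $f_{1}:=f-ax^{d}$, verify $f_{1}^{\alpha}\diamond g\subseteq P[x]$, apply the induction hypothesis to obtain $f_{1}^{\beta}\subseteq P[x]$, and then reassemble: using sub-distributivity, $f^{\beta}=(f_{1}+ax^{d})^{\beta}\subseteq\sum_{k=0}^{\beta}\binom{\beta}{k}f_{1}^{\beta-k}\diamond a^{k}x^{dk}$, where the $k=0$ term is $f_{1}^{\beta}\subseteq P[x]$ and each $k\geq\beta$ term contains $a^{k}=a^{k-\beta}\circ a^{\beta}\subseteq P$ and so lies in $P[x]$; the intermediate terms must be absorbed with the help of the $(\alpha,\beta)$-prime machinery already developed, and here it is convenient to note that by Theorem \ref{6} (applied both to $P$ in $H$ and to $P[x]$ in $H[x]$) one may as well assume $\alpha\leq\beta$ from the outset.

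The routine parts are verifying that $P[x]$ is a hyperideal and the ``peel off a leading monomial'' reductions, for which one must work with the inclusions $(u+v)\diamond w\subseteq u\diamond w+v\diamond w$ coming from sub-distributivity rather than with equalities, and must note that passing from a polynomial to a member of a $\diamond$-product can only restrict the set of possible top-degree coefficients, so that $a^{\alpha}\circ b\subseteq P$ is genuinely forced. The hard case is $a^{\beta}\subseteq P$ with $b\notin P$: ``$a^{\beta}\subseteq P$'' is only a statement about the leading coefficient, and propagating it through the set-valued, sub-distributive multiplication to $f^{\beta}\subseteq P[x]$ --- both establishing $f_{1}^{\alpha}\diamond g\subseteq P[x]$ and controlling the intermediate binomial terms $f_{1}^{\beta-k}\diamond a^{k}x^{dk}$ for $1\le k<\beta$ --- is the step I expect to be the main obstacle.
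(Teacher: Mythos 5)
There is a genuine gap, and it is exactly the case you flag as ``the main obstacle'': your proposal sets up the reduction but does not, and as written cannot, close it. When $a^{\beta}\subseteq P$ but $b\notin P$, your induction needs $f_{1}^{\alpha}\diamond g\subseteq P[x]$ with $f_{1}=f-ax^{d}$. Sub-distributivity only gives $f_{1}^{\alpha}\diamond g\subseteq\sum_{k=0}^{\alpha}\binom{\alpha}{k}\,f^{\alpha-k}\diamond(-ax^{d})^{k}\diamond g$, and the only terms you control are $k=0$ (inside $P[x]$ by hypothesis) and $k\geq\beta$ (where $a^{k}=a^{k-\beta}\circ a^{\beta}\subseteq P$); for $1\leq k<\beta$ nothing is known about $a^{k}$, so the needed inclusion is not established. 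The same problem recurs when you reassemble $f^{\beta}\subseteq\sum_{k=0}^{\beta}\binom{\beta}{k}\,f_{1}^{\beta-k}\diamond a^{k}x^{dk}$: for $1\leq k<\beta$ neither $a^{k}\subseteq P$ nor $f_{1}^{\beta-k}\subseteq P[x]$ is available, and no piece of the ``$(\alpha,\beta)$-prime machinery'' supplies a way to absorb these mixed terms. Appealing to Theorem \ref{6} does not repair this: its nontrivial direction is proved via Theorem \ref{1}, which requires a $\mathcal{C}$-hyperideal hypothesis (you would need it for $P[x]$, which you have not verified), and even after normalizing the exponents the problematic range $1\leq k<\beta$ remains. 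A smaller soft spot: from ``every member of $f^{\alpha}\diamond g$ has top coefficient in $a^{\alpha}\circ b$'' you infer $a^{\alpha}\circ b\subseteq P$, which needs the converse fact that every element of $a^{\alpha}\circ b$ occurs as such a top coefficient; with only the monomial rule $ux^{n}\diamond vx^{m}=(u\circ v)x^{n+m}$ and sub-distributive extension to sums, that surjectivity has to be argued, not assumed.

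For comparison, the paper's own proof sidesteps the entire difficulty: it assumes ``without loss of generality'' that $u(x)=ax^{n}$ and $v(x)=bx^{m}$ are monomials, so the hypothesis becomes $a^{\alpha}\circ b\,x^{\alpha n+m}\subseteq P[x]$, i.e.\ $a^{\alpha}\circ b\subseteq P$, and the conclusion is immediate from the $(\alpha,\beta)$-primeness of $P$. The general-polynomial case you tried to treat honestly is precisely what that WLOG suppresses; to make your route work you would have to either justify such a reduction to monomials or find a genuine argument for the intermediate binomial terms, and the proposal does neither.
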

\begin{proof}
Suppose that $u(x)^{\alpha} \diamond v(x) \subseteq P[x]$. Without
loss of generality, we may assume that $u(x)=ax^n$ and $v(x)=bx^m$ for $a,b \in H$. Hence $a^{\alpha} \circ b x^{\alpha n+m} \subseteq P[x]$. This means $a^{\alpha} \circ b \subseteq P$. Since $P$ is an $(\alpha,\beta)$-prime hyperideal of $(H,+,\circ)$, we get $a^{\beta} \subseteq P$ or $b \in P$ which implies $u(x)^{\beta}={(ax^n)}^{\beta} =a^{\beta}x^{\beta \cdot n}\subseteq P$ or $v(x)=bx^m \in P[x]$. Consequently, $P[x]$ is an $(\alpha,\beta)$-prime hyperideal of $(H[x],+,\diamond)$. 
\end{proof}
In view of Theorem \ref{polynomail}, we have the following result.
\begin{corollary}
Assume that $P$ is an $(\alpha,\beta)$-prime hyperideal of $H$. Then $P[x]$ is an $(\alpha,\beta)$-prime hyperideal of $H[x]$. 
\end{corollary}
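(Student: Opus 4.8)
The plan is to obtain this statement as a direct specialization of Theorem \ref{polynomail}. Since $H$ is itself a commutative multiplicative hyperring with identity, Theorem \ref{polynomail} applied verbatim to $H$ (and to the given $P$, $\alpha$, $\beta$) already yields that $P[x]$ is an $(\alpha,\beta)$-prime hyperideal of $H[x]$; in this reading there is nothing further to verify, and the corollary is merely a restatement in which the hypothesis ``$\alpha,\beta\in\mathbb{Z}^+$'' is left implicit.

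If instead the intended reading is the polynomial hyperring $H[x_1,\dots,x_n]$ in finitely many indeterminates, I would argue by induction on $n$. The base case $n=1$ is exactly Theorem \ref{polynomail}. For the inductive step I would use the identification $H[x_1,\dots,x_n]=\bigl(H[x_1,\dots,x_{n-1}]\bigr)[x_n]$ of polynomial multiplicative hyperrings, together with the corresponding identification of hyperideals $P[x_1,\dots,x_n]=\bigl(P[x_1,\dots,x_{n-1}]\bigr)[x_n]$. By the induction hypothesis $P[x_1,\dots,x_{n-1}]$ is an $(\alpha,\beta)$-prime hyperideal of $H[x_1,\dots,x_{n-1}]$, and one further application of Theorem \ref{polynomail}, now with the hyperring $H[x_1,\dots,x_{n-1}]$ in place of $H$, finishes the argument.

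The only point requiring attention in the iterated version is the bookkeeping: one must confirm that adjoining indeterminates one at a time reproduces the multivariate polynomial hyperring, i.e. that the hyperoperation $u x^n \diamond v x^m=(u\circ v)x^{n+m}$ is compatible with iteration and that $\bigl(P[x_1,\dots,x_{n-1}]\bigr)[x_n]$ coincides with $P[x_1,\dots,x_n]$ as a subset of $H[x_1,\dots,x_n]$. This is routine, so I expect no genuine obstacle here; the corollary is essentially an immediate consequence of Theorem \ref{polynomail}.
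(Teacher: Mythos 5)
Your first reading is exactly what the paper does: the corollary is stated ``in view of Theorem \ref{polynomail}'' with no further argument, so citing that theorem verbatim is the intended (and complete) proof. Your additional induction for a multivariate reading $H[x_1,\dots,x_n]$ is a reasonable guess at a stronger intended statement and is sound, but it goes beyond what the paper claims or proves.
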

  
Assume that $H$ is a multiplicative hyperring. Then the set of all hypermatrices of $H$ is denoted by  $M_m(H)$. Let  $A = (A_{ij})_{m \times m}, B = (B_{ij})_{m \times m} \in P^\star (M_m(H))$. Then $A \subseteq B$ if and only if $A_{ij} \subseteq B_{ij}$\cite{ameri}. 
\begin{theorem} \label{8} 
Suppose that  $P$ is a hyperideal of $H$ and $\alpha, \beta \in \mathbb{Z}^+$. If $M_m(P)$ is an $(\alpha,\beta)$-prime hyperideal of $M_m(H)$, then $P$ is an $(\alpha,\beta)$-prime hyperideal of $H$. 
\end{theorem}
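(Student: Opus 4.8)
The plan is to reflect the $(\alpha,\beta)$-prime condition on $M_m(P)$ back down to $P$ by testing it on ``elementary'' hypermatrices that are supported only in the $(1,1)$-slot. First I would note that $P$ is proper: if we had $P=H$, then $M_m(P)=M_m(H)$, contradicting that $M_m(P)$ is a proper hyperideal. So it remains to verify the defining implication. Take $x,y\in H$ with $x^{\alpha}\circ y\subseteq P$; the goal is to show $x^{\beta}\subseteq P$ or $y\in P$. Define $A,B\in M_m(H)$ by $A_{11}=x$, $B_{11}=y$ and all other entries equal to $0$ (note $0\in P$, since a hyperideal is closed under $a-b$ and hence contains $a-a=0$).

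The key step is a short induction on $k$ showing that every hypermatrix occurring in the $\circ$-power $A^{k}$ inside $M_m(H)$ has all entries equal to $0$ except possibly the $(1,1)$-entry, and that the set of $(1,1)$-entries that occur across $A^{k}$ is exactly $x^{k}=\underbrace{x\circ\cdots\circ x}_{k}$. This uses only the entrywise formula for products of hypermatrices together with $0\circ h=0$ for all $h\in H$: in the $(1,1)$-entry of a product of two such matrices all summands except the $(1,1)\cdot(1,1)$ one vanish, and every other entry vanishes entirely. Consequently each hypermatrix in $A^{\alpha}\circ B$ has $(1,1)$-entry lying in $x^{\alpha}\circ y\subseteq P$ and all remaining entries equal to $0\in P$, so, using the convention $A\subseteq B\iff A_{ij}\subseteq B_{ij}$, we get $A^{\alpha}\circ B\subseteq M_m(P)$.

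Since $M_m(P)$ is an $(\alpha,\beta)$-prime hyperideal of $M_m(H)$, it follows that $A^{\beta}\subseteq M_m(P)$ or $B\in M_m(P)$. In the first case, the $(1,1)$-entries of $A^{\beta}$ form precisely $x^{\beta}$, so $A^{\beta}\subseteq M_m(P)$ forces $x^{\beta}\subseteq P$. In the second case, $B$ is a single hypermatrix whose $(1,1)$-entry is $y$, so $B\in M_m(P)$ forces $y\in P$. Either way the $(\alpha,\beta)$-prime condition for $P$ is verified, completing the proof. The only place that requires genuine care is the inductive description of $A^{k}$ — i.e.\ checking that the class of hypermatrices supported on the $(1,1)$-slot is closed under $\circ$ and that iterating it yields exactly the hyperpowers $x^{k}$ in the $(1,1)$-entry; everything else is routine translation between set-inclusions of hypermatrices and inclusion/membership statements in $H$.
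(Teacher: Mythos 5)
Your proposal is correct and follows essentially the same route as the paper: both verify the $(\alpha,\beta)$-prime condition for $P$ by testing the $(\alpha,\beta)$-prime property of $M_m(P)$ on hypermatrices supported only in the $(1,1)$-entry and then translating $A^{\beta}\subseteq M_m(P)$ or $B\in M_m(P)$ back into $x^{\beta}\subseteq P$ or $y\in P$. The only difference is presentational: you spell out, via induction and the hypothesis $0\circ h=\{0\}$, the identity between the matrix product and the matrix with $x^{\alpha}\circ y$ in the $(1,1)$-slot, which the paper simply asserts as an equality of hypermatrices.
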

\begin{proof}
Let $x^{\alpha} \circ y \subseteq P$ for some $x,y \in H$. Then we have
\[\begin{pmatrix}
x^{\alpha}\circ y & 0 & \cdots & 0 \\
0 & 0 & \cdots & 0 \\
\vdots & \vdots & \ddots \vdots \\
0 & 0 & \cdots & 0 
\end{pmatrix}
\subseteq M_m(P).\]
Since   $M_m(P)$ is an $(\alpha,\beta)$-prime hyperideal of $M_m(H)$ and 
\[ \begin{pmatrix}
x^{\alpha}\circ y & 0 & \cdots & 0\\
0 & 0 & \cdots & 0\\
\vdots & \vdots & \ddots \vdots\\
0 & 0 & \cdots & 0
\end{pmatrix}
=
\underbrace{\begin{pmatrix}
x & 0 & \cdots & 0\\
0 & 0 & \cdots & 0\\
\vdots & \vdots & \ddots \vdots\\
0 & 0 & \cdots & 0
\end{pmatrix}
\circ \cdots \circ
\begin{pmatrix}
x & 0 & \cdots & 0\\
0 & 0 & \cdots & 0\\
\vdots & \vdots & \ddots \vdots\\
0 & 0 & \cdots & 0
\end{pmatrix}}_{\alpha}\circ
\begin{pmatrix}
y & 0 & \cdots & 0\\
0 & 0 & \cdots & 0\\
\vdots & \vdots & \ddots \vdots\\
0 & 0 & \cdots & 0
\end{pmatrix},
\]
we get the result that 
\[ \underbrace{\begin{pmatrix}
x & 0 & \cdots & 0\\
0 & 0 & \cdots & 0\\
\vdots& \vdots & \ddots \vdots\\
0 & 0 & \cdots & 0
\end{pmatrix} 
\circ \cdots \circ
\begin{pmatrix}
x & 0 & \cdots & 0\\
0 & 0 & \cdots & 0\\
\vdots& \vdots & \ddots \vdots\\
0 & 0 & \cdots & 0
\end{pmatrix}}_{\beta}=
\begin{pmatrix}
x^{\beta} & 0 & \cdots & 0\\
0 & 0 & \cdots & 0\\
\vdots& \vdots & \ddots \vdots\\
0 & 0 & \cdots & 0
\end{pmatrix}
\subseteq M_m(P)\]
or 
\[\begin{pmatrix}
y & 0 & \cdots & 0\\
0 & 0 & \cdots & 0\\
\vdots & \vdots & \ddots \vdots\\
0 & 0 & \cdots & 0
\end{pmatrix} \in M_m(P).
\]

Then we conclude that  $x^{\beta} \subseteq P$ or $y \in P$. Consequently,  $P$ is an $(\alpha,\beta)$-prime hyperideal of $H$.
\end{proof}
 Recall from \cite{f10} that a mapping $\psi$ from the multiplicative hyperring
$(H_1, +_1, \circ _1)$ into the multiplicative hyperring $(H_2, +_2, \circ _2)$  refers to  a hyperring good homomorphism if $\psi(a +_1 b) =\psi(a)+_2 \psi(b)$ and $\psi(a\circ_1b) = \psi(a)\circ_2 \psi(b)$  for all $a,b \in H_1$.

\begin{theorem} \label{homo} 
Assume that $H_1$ and $H_2$ are two multiplicative hyperrins, $\psi: H_1 \longrightarrow H_2$ a hyperring
good homomorphism and $\alpha, \beta \in \mathbb{Z}^+$. 
\begin{itemize}
\item[\rm{(i)}]~ If $P_2$ is an $(\alpha,\beta)$-prime hyperideal of $H_2$, then $\psi^{-1}(P_2)$ is an $(\alpha,\beta)$-prime hyperideal of $H_1$.
\item[\rm{(ii)}]~ If $\psi$ is surjective and $P_1$ is a an $(\alpha,\beta)$-prime $\mathcal{C}$-hyperideal of $H_1$ with $Ker (\psi) \subseteq P_1$ , then $\psi(P_1)$ is an $(\alpha,\beta)$-prime hyperideal of $H_2$.
\end{itemize}
\end{theorem}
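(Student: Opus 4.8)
The plan is to handle the two parts separately, but the engine is the same in both: a good homomorphism commutes with the extended hyperoperation, so that $\psi(z^{\gamma}) = \psi(z)^{\gamma}$ for every $z \in H_1$ and every $\gamma \in \mathbb{Z}^+$. I would first record this as a preliminary observation, proved by a straightforward induction on $\gamma$ from $\psi(a\circ_1 b)=\psi(a)\circ_2\psi(b)$ together with the fact that $\psi$, viewed as a set map, respects arbitrary unions, so $\psi(x^{\alpha}\circ_1 y)=\psi(x)^{\alpha}\circ_2\psi(y)$. I would also note the routine facts that $\psi^{-1}(P_2)$ is a proper hyperideal of $H_1$, and that when $\psi$ is surjective $\psi(P_1)$ is a proper hyperideal of $H_2$ (properness here follows from $\mathrm{Ker}(\psi)\subseteq P_1$, since $\psi(P_1)=H_2$ would force $1_{H_1}\in P_1$).

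For (i): suppose $x^{\alpha}\circ_1 y\subseteq\psi^{-1}(P_2)$ for $x,y\in H_1$. Applying $\psi$ and using the preliminary observation gives $\psi(x)^{\alpha}\circ_2\psi(y)=\psi(x^{\alpha}\circ_1 y)\subseteq P_2$. Since $P_2$ is $(\alpha,\beta)$-prime, either $\psi(x)^{\beta}\subseteq P_2$ or $\psi(y)\in P_2$. In the first case $\psi(x^{\beta})=\psi(x)^{\beta}\subseteq P_2$, so $x^{\beta}\subseteq\psi^{-1}(P_2)$; in the second case $y\in\psi^{-1}(P_2)$. This is exactly the $(\alpha,\beta)$-prime condition for $\psi^{-1}(P_2)$.

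For (ii): suppose $a^{\alpha}\circ_2 b\subseteq\psi(P_1)$ for $a,b\in H_2$, and use surjectivity to write $a=\psi(x)$, $b=\psi(y)$ with $x,y\in H_1$. Then $\psi(x^{\alpha}\circ_1 y)=\psi(x)^{\alpha}\circ_2\psi(y)\subseteq\psi(P_1)$. The crucial step -- and the one I expect to be the main obstacle -- is promoting this to $x^{\alpha}\circ_1 y\subseteq P_1$: given $t\in x^{\alpha}\circ_1 y$ we have $\psi(t)\in\psi(P_1)$, so $\psi(t)=\psi(p)$ for some $p\in P_1$, whence $t-p\in\mathrm{Ker}(\psi)\subseteq P_1$ and therefore $t\in P_1$ (using that $P_1$ is an additive subgroup). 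Thus $x^{\alpha}\circ_1 y\subseteq P_1$, and since $P_1$ is $(\alpha,\beta)$-prime we get $x^{\beta}\subseteq P_1$ or $y\in P_1$; applying $\psi$ yields $a^{\beta}=\psi(x)^{\beta}=\psi(x^{\beta})\subseteq\psi(P_1)$ or $b=\psi(y)\in\psi(P_1)$, which proves $\psi(P_1)$ is $(\alpha,\beta)$-prime. The hypothesis $\mathrm{Ker}(\psi)\subseteq P_1$ is what makes the pull-back of elements land back in $P_1$ and is indispensable; the $\mathcal{C}$-hyperideal assumption on $P_1$ is used to keep the image hyperideal well behaved with respect to products of elements (and, if needed, for properness), so I would keep it in the statement even though the core computation only leans on $\mathrm{Ker}(\psi)\subseteq P_1$ and surjectivity.
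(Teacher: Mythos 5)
Your proof is correct and follows essentially the same route as the paper's: part (i) by pushing the containment forward through the good homomorphism, and part (ii) by lifting via surjectivity and using $\mathrm{Ker}(\psi)\subseteq P_1$ to pull each element of $x^{\alpha}\circ_1 y$ back into $P_1$ before applying $(\alpha,\beta)$-primeness. Your side remark is also on point: the paper cites the $\mathcal{C}$-hyperideal hypothesis at that step, but, exactly as in your argument, the element-wise kernel computation already gives $x^{\alpha}\circ_1 y\subseteq P_1$.
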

\begin{proof}
(i) Let $ x^{\alpha} \circ_1 x_1 \subseteq \psi^{-1}(P_2)$ for some $x,x_1 \in H_1$. Then we have $ \psi(x^{\alpha} \circ_1 x_1)=\psi(x)^{\alpha} \circ_2 \psi(x_1) \subseteq P_2$ as $\psi$ is  a hyperring
good homomorphism. Since $P_2$ is an $(\alpha,\beta)$-prime hyperideal of $H_2$, we have $\psi(x^{\beta})=(\psi(x))^{\beta} \subseteq P_2$   which means $x^{\beta} \subseteq \psi^{-1}(P_2)$ or $\psi(x_1) \in P_2$ which implies $x_1 \in \psi^{-1}(P_2)$. Consequently,   $\psi^{-1}(P_2)$ is an $(\alpha,\beta)$-prime hyperideal of $H_1$.

(ii)  Let $y^{\alpha} \circ_2 y_1 \subseteq \psi(P_1)$ for  $y,y_1 \in H_2$. Then  $\psi(x)=y$ and $\psi(x_1)=y_1$for some $x,x_1 \in H_1$ because $\psi$ is surjective. Hence $\psi(x^{\alpha} \circ_1 x_1)=\psi(x)^{\alpha} \circ_2 \psi(x_1)\subseteq \psi(P_1)$. Now, pick any $a \in x^{\alpha} \circ_1 x_1$. Then $\psi(a) \in \psi(x^{\alpha} \circ_1 x_1) \subseteq \psi(P_1)$ and so there exists $b \in P_1$ such that $\psi(a)=\psi(b)$. Then we have $\psi(a-b)=0$ which means $a-b \in Ker (\psi)\subseteq P_1$ and so  $a \in P_1$. Therefore $x^{\alpha} \circ_1 x_1  \subseteq P_1$ as $P_1$ is a $\mathcal{C}$-hyperideal. Since $P_1$ is an $(\alpha,\beta)$-prime hyperideal of $H_1$,  we obtain  $x^{\beta} \subseteq P_1$ or $x_1 \in P_1$. This implies that $y^{\beta} =\psi(x^{\beta})\subseteq \psi(P_1)$ or $y_1=\psi(x_1) \in \psi(P_1)$.  Thus  $\psi(P_1)$  is an $(\alpha,\beta)$-prime hyperideal of $H_2$.
\end{proof}
Now, we have the following result.
\begin{corollary}
 Let $P_1$ and $P_2$ be two hyperideals of $H$ with $P_1 \subseteq P_2$ and $\alpha,\beta \in \mathbb{Z}^+$. Then  $P_2$ is an $(\alpha,\beta)$-prime hyperideal of $H$ if and only if   $P_2/P_1$ is an $(\alpha,\beta)$-prime hyperideal of $H/P_1$.
\end{corollary}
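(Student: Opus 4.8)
The plan is to realize the quotient $H/P_1$ through the canonical projection and then deduce the statement from Theorem \ref{homo}, adding a short direct argument for the direction that Theorem \ref{homo}(ii) does not literally cover. First I would record the standard facts about $\pi\colon H\longrightarrow H/P_1$, $x\mapsto x+P_1$: it is a surjective hyperring good homomorphism with $\mathrm{Ker}(\pi)=P_1$, and in the quotient hyperring one has $\bar x\circ\bar y=\pi(x\circ y)$, hence by induction $\bar x^{\alpha}\circ\bar y=\pi(x^{\alpha}\circ y)$ for all $x,y\in H$. Since $P_1\subseteq P_2$, the hyperideal $P_2/P_1$ of $H/P_1$ is precisely $\pi(P_2)$, and $\pi^{-1}(P_2/P_1)=P_2$.

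For the ``if'' direction, assume $P_2/P_1$ is an $(\alpha,\beta)$-prime hyperideal of $H/P_1$. Applying Theorem \ref{homo}(i) to $\psi=\pi$ and to the hyperideal $P_2/P_1$ of $H/P_1$ yields that $\psi^{-1}(P_2/P_1)=P_2$ is an $(\alpha,\beta)$-prime hyperideal of $H$, which is exactly what is wanted.

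For the ``only if'' direction I would argue directly, since Theorem \ref{homo}(ii) carries the extra hypothesis that $P_1$ be a $\mathcal C$-hyperideal, which is not assumed here. Suppose $P_2$ is $(\alpha,\beta)$-prime and let $\bar x^{\alpha}\circ\bar y\subseteq P_2/P_1$ for $\bar x,\bar y\in H/P_1$. By the identity above this reads $\pi(x^{\alpha}\circ y)\subseteq\pi(P_2)$, and because $\mathrm{Ker}(\pi)=P_1\subseteq P_2$ this forces $x^{\alpha}\circ y\subseteq P_2$. As $P_2$ is $(\alpha,\beta)$-prime, either $x^{\beta}\subseteq P_2$, giving $\bar x^{\beta}=\pi(x^{\beta})\subseteq P_2/P_1$, or $y\in P_2$, giving $\bar y\in P_2/P_1$; and $P_2/P_1\neq H/P_1$ since $P_2\neq H$. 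Thus $P_2/P_1$ is $(\alpha,\beta)$-prime. There is no genuine obstacle in this proof; the only point needing care is the passage between inclusions in $H/P_1$ and inclusions in $H$, namely the equality $\pi^{-1}(\pi(P_2))=P_2$, which is exactly where the standing assumption $P_1\subseteq P_2$ is used.
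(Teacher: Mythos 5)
Your proof is correct and follows essentially the same route as the paper: realize $H/P_1$ via the canonical projection $\pi$ and reduce to Theorem \ref{homo}. The only genuine difference lies in the ``only if'' direction: the paper disposes of both directions by a one-line citation of Theorem \ref{homo}, even though part (ii) of that theorem formally requires the hyperideal being pushed forward to be a $\mathcal{C}$-hyperideal, a hypothesis the corollary does not impose on $P_2$; you instead give a short direct argument, using $\mathrm{Ker}(\pi)=P_1\subseteq P_2$ to pass from $\pi(x^{\alpha}\circ y)\subseteq P_2/P_1$ back to $x^{\alpha}\circ y\subseteq P_2$. This is precisely the kernel argument occurring inside the proof of Theorem \ref{homo}(ii), where the $\mathcal{C}$-hyperideal hypothesis is in fact never really used, so your version is slightly more careful than the paper's citation and closes that small gap; your ``if'' direction, via Theorem \ref{homo}(i) together with the observation $\pi^{-1}(P_2/P_1)=P_2$ (which is where $P_1\subseteq P_2$ enters), coincides with the paper's argument.
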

\begin{proof}
Consider the homomorphism $\pi :H \longrightarrow H/P_1$ defined by $\pi(x)=x+P_1$. Then the claim follows from Theorem \ref{homo} as $\pi$ is a good epimorphism.
\end{proof}
 \section{weakly $(\alpha,\beta)$-prime hyperideals}
 \begin{definition}
Assume that $P$ is a proper hyperideal of $H$ and $\alpha,\beta\in \mathbb{Z}^+$. $P$ is said to be  a weakly $(\alpha,\beta)$-prime hyperideal if  $0 \notin x^{\alpha} \circ y \subseteq P$ for $x,y \in H$ implies  that $x^{\beta} \subseteq P$ or $y \in P$. 
 \end{definition}
 \begin{example}
Consider the ring $(\mathbb{Z}_8,\oplus, \odot)$ where $\bar{x} \oplus \bar{y}$ and $\bar{x} \odot \bar{y}$ are remainder of  $\frac{x+y}{8}$ and $\frac{x \cdot y}{8}$, respectively,  where $+$ and $\cdot$ are ordinary addition and multiplication for all $\bar{x}, \bar{y} \in \mathbb{Z}_8$. Define the hyperoperation $\bar{x} \circ \bar{y}=\{\overline{xy},\overline{2xy}, \overline{3xy},\overline{4xy},\overline{5xy},\overline{6xy},\overline{7xy}\}$. Then the hyperideal $Q=\{\bar{0},\bar{4}\}$ of $(\mathbb{Z}_8,\oplus,\circ)$ is weakly $(3,1)$-prime but it is not $(3,1)$-prime. 
\end{example}
 \begin{theorem}\label{8}
Assume that the zero hyperideal of $H$ is  a $\mathcal{C}$-hyperideal such that its radical is  prime. If $P$ is a weakly  $(\alpha,\beta)$-prime $\mathcal{C}$-hyperideal of $H$ for $\alpha,\beta\in \mathbb{Z}^+$, then $rad(P)$ is prime. In particular, $x^{\beta} \subseteq P$ for each $x \in rad(P)-rad(0)$.
 \end{theorem}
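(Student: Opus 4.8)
The plan is to prove the ``in particular'' clause first, since it is the lever for everything else, and then bootstrap from it to primeness of $rad(P)$. So first I would take $x\in rad(P)-rad(0)$. Because $P$ is a $\mathcal{C}$-hyperideal, Proposition 3.2 of \cite{das} gives $rad(P)=\{r\in H:r^{k}\subseteq P\text{ for some }k\}$, so there is a least $n$ with $x^{n}\subseteq P$; the case $n=1$ gives $x\in P$, hence $x^{\beta}\subseteq P$ at once since $P$ is a hyperideal. For $n\ge 2$ we have $x^{n-1}\nsubseteq P$ while $x^{\alpha}\circ x^{n-1}=x^{\alpha+n-1}\subseteq P$ (as $\alpha+n-1\ge n$). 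Moreover, because the zero hyperideal is a $\mathcal{C}$-hyperideal and $x\notin rad(0)$, no power $x^{m}$ can contain $0$ (otherwise $x^{m}=\{0\}$ and $x\in rad(0)$), so in particular $0\notin x^{\alpha+n-1}$. Picking $y\in x^{n-1}$ with $y\notin P$, we get $0\notin x^{\alpha}\circ y\subseteq x^{\alpha+n-1}\subseteq P$, and weak $(\alpha,\beta)$-primeness together with $y\notin P$ forces $x^{\beta}\subseteq P$. This is the second assertion.

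For primeness of $rad(P)$ I would first record two consequences of the hypotheses on the zero hyperideal: since $0$ is a $\mathcal{C}$-hyperideal, $r\notin rad(0)$ is equivalent to $0\notin r^{m}$ for all $m$; and since $rad(0)$ is prime, any finite $\circ$-product of elements of $H$ none of which lies in $rad(0)$ is not contained in $rad(0)$ (iterate primeness), hence --- being in $\mathbf{C}$ --- does not contain $0$. Now let $a\circ b\subseteq rad(P)$. If $a\circ b\subseteq rad(0)$, then primeness of $rad(0)$ together with $rad(0)\subseteq rad(P)$ gives $a\in rad(P)$ or $b\in rad(P)$. Otherwise $a\circ b\nsubseteq rad(0)$, which (as $rad(0)$ is a hyperideal) forces $a\notin rad(0)$ and $b\notin rad(0)$. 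Pick $c\in a\circ b$ with $c\notin rad(0)$; then $c\in rad(P)-rad(0)$, so $c^{\beta}\subseteq P$ by the first part, and from $\varnothing\neq c^{\beta}\subseteq(a\circ b)^{\beta}=a^{\beta}\circ b^{\beta}$ (using commutativity) we get $(a^{\beta}\circ b^{\beta})\cap P\neq\varnothing$; since $a^{\beta}\circ b^{\beta}\in\mathbf{C}$ and $P$ is a $\mathcal{C}$-hyperideal, this yields $a^{\beta}\circ b^{\beta}\subseteq P$.

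Now suppose for contradiction that $a\notin rad(P)$ and $b\notin rad(P)$, so $a^{\beta}\nsubseteq P$ and $b^{\beta}\nsubseteq P$. From $a^{\beta}\circ b^{\beta}\subseteq P$ and $P$ a hyperideal, $a^{\alpha\beta}\circ b^{\beta}\subseteq P$. I would then peel off $\alpha$ copies of $a$ at a time: for $k=\beta,\beta-1,\dots,2$, write $a^{\alpha k}\circ b^{\beta}=a^{\alpha}\circ(a^{\alpha(k-1)}\circ b^{\beta})$; for each $y$ in the inner factor, $a^{\alpha}\circ y\subseteq a^{\alpha k}\circ b^{\beta}\subseteq P$ with $0\notin a^{\alpha k}\circ b^{\beta}$ by the second observation above, hence $0\notin a^{\alpha}\circ y$, so weak $(\alpha,\beta)$-primeness and $a^{\beta}\nsubseteq P$ give $y\in P$, i.e.\ $a^{\alpha(k-1)}\circ b^{\beta}\subseteq P$. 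Iterating reaches $a^{\alpha}\circ b^{\beta}\subseteq P$, and one final application, now with $y$ ranging over $b^{\beta}$, gives $b^{\beta}\subseteq P$, contradicting $b\notin rad(P)$. Hence $a\in rad(P)$ or $b\in rad(P)$; as $rad(P)$ is proper (e.g.\ $P$ is contained in a prime hyperideal of $H$), $rad(P)$ is prime.

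The hard part will not be any single computation but keeping every invocation of weak $(\alpha,\beta)$-primeness legitimate, namely verifying that $0$ is absent from the relevant hyperproduct each time; this is precisely where the two hypotheses on the zero hyperideal get spent, and they have to be reinvoked at every peeling step. A lesser nuisance is that the definition is phrased for elements $x,y$, whereas the objects that naturally arise ($a^{\beta}\circ b^{\beta}$ and its $a$-multiples) are sets lying in $\mathbf{C}$, so the $\mathcal{C}$-hyperideal property of $P$ is needed to pass from ``$P$ meets the product'' to ``$P$ contains the product'' before the definition applies, and commutativity is needed to write $(a\circ b)^{\beta}=a^{\beta}\circ b^{\beta}$.
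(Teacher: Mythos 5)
Your proposal is correct, but the way you obtain primeness of $rad(P)$ is genuinely different from the paper's. For the ``in particular'' clause you argue essentially as the paper does (take the least $n$ with $x^{n}\subseteq P$ and apply weak $(\alpha,\beta)$-primeness to $x^{\alpha}\circ y$ with $y\in x^{n-1}$); your variant of choosing $y\in x^{n-1}$ with $y\notin P$ directly, instead of excluding $y\in P$ through the $\mathcal{C}$-property of $P$ and minimality of $n$, is an equivalent and slightly cleaner bookkeeping. For primeness the paper proceeds in the opposite order and independently of that clause: from $x\circ y\subseteq rad(P)$ it asserts a uniform $n$ with $x^{n}\circ y^{n}\subseteq P$, fixes single elements $a\in x^{n}$, $b\in y^{n}$ with $a^{\alpha}\circ b\subseteq P$, and makes a single application of either primeness of $rad(0)$ (when $0\in a^{\alpha}\circ b$, forced to equal $\{0\}$ by the $\mathcal{C}$-property of the zero hyperideal) or weak $(\alpha,\beta)$-primeness (otherwise), then lifts back to $x,y$ via the $\mathcal{C}$-property of $P$. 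You instead split on whether $a\circ b\subseteq rad(0)$ and, in the nonzero case, bootstrap from the already-proved clause applied to one element $c\in(a\circ b)-rad(0)$ to get $a^{\beta}\circ b^{\beta}\subseteq P$, then run a descent with repeated applications of weak primeness, certifying $0\notin a^{\alpha k}\circ b^{\beta}$ at each step from primeness of $rad(0)$ together with the $\mathcal{C}$-property of zero. Your route is longer, but it only needs a single $c\in a\circ b$ with $c^{\beta}\subseteq P$, whereas the paper's opening step (one exponent $n$ working for the whole, possibly infinite, set $x\circ y$, i.e.\ $(x\circ y)^{n}\subseteq P$ including mixed products) is asserted without justification; granted that step, the paper's argument is a one-shot application of the definitions and is shorter. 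The only soft spot in yours is the parenthetical claim that $rad(P)$ is proper; it is easily repaired (since $P$ is a $\mathcal{C}$-hyperideal, $rad(P)=\{r\in H \mid r^{k}\subseteq P \text{ for some } k\}$, and $1\in rad(P)$ would force $1\in P$), and the paper leaves the same point implicit.
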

 \begin{proof}
 Let $x \circ y \subseteq rad(P)$ for $x,y \in H$. Then there exists $n \in \mathbb{Z}^+$ such that $x^n \circ y^n \subseteq P$. Assume that $a \in x^n$ and $b \in y^n$, so $a^{\alpha} \circ b \subseteq P$. If $0 \in a^{\alpha} \circ b$, then $a^{\alpha} \circ b =0$ as the zero hyperideal of $H$ is  a $\mathcal{C}$-hyperideal. Since radical of the hyperideal is prime, we have $a \in rad(0)$ or $b \in rad(0)$. This means $x \in rad(0)$ or $y \in rad(0)$. Then we get the result that  $x \in rad(P)$ or $y \in rad(P)$. Assume that $0 \notin a^{\alpha} \circ b$. Since $P$ is a weakly  $(m,n)$-prime hyperideal of $H$, we get either $a^{\beta} \subseteq P$ or $b \in P$. Since $P$ is a $\mathcal{C}$-hyperideal, $a^{\beta} \subseteq x^{\beta n}$ or $b \in y^n$, we have $x^{\beta n} \subseteq P$ which implies $x \in rad(P)$ or $y^n \subseteq P$ which means $y \in rad(P)$, as needed.
Now, take any $x \in rad(P)-rad(0)$. Assume that $n$ is the least positive integer with $x^n \subseteq P$. Since $x \notin rad(0)$, we conclude that $0 \notin x^{\alpha} \circ x^{n-1}$. Let $y \in x^{n-1}$. So $0 \notin x^{\alpha} \circ y \subseteq P$. Since $P$ is a weakly  $(m,n)$-prime hyperideal and $y \notin P$, we get $x^{\beta} \subseteq P$, as required.
 \end{proof}
 \begin{theorem}\label{9}
  If every proper hyperideal of $H$ is weakly $(\alpha,\beta)$-prime such that  $\alpha,\beta\in \mathbb{Z}^+$ and $\alpha \geq \beta$, then every prime $\mathcal{C}$-hyperideal of $H$ is maximal.
 \end{theorem}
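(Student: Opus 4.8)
The plan is to argue by contradiction, imitating the proof of the theorem that characterises hyperrings in which every proper hyperideal is $(\alpha,\beta)$-prime, but adapting it so that the weak hypothesis — which only constrains products $x^{\alpha}\circ y$ with $0\notin x^{\alpha}\circ y$ — can be brought to bear. Suppose some prime $\mathcal{C}$-hyperideal $P$ of $H$ is not maximal, so there is a hyperideal $Q$ with $P\subsetneq Q\subsetneq H$; fix $x\in Q\setminus P$. Since $P$ is prime, $rad(P)=P$, hence $\{r\in H\ \vert\ r^{n}\subseteq P\text{ for some }n\in\mathbb{N}\}\subseteq P$, so $x^{n}\nsubseteq P$ for every $n$. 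Also $\langle x^{\alpha+1}\rangle$ is a proper hyperideal, since $1\in\langle x^{\alpha+1}\rangle$ would force $\langle x\rangle=H$ and hence $Q=H$.

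The crucial new point, and the step I expect to be the main obstacle, is to show $0\notin x^{\alpha+1}$, which is exactly what is needed before one may invoke weak $(\alpha,\beta)$-primeness of $\langle x^{\alpha+1}\rangle$. I would obtain this from the $\mathcal{C}$-hyperideal property of $P$: if $0\in x^{\alpha+1}$, then, since $0\in P$ and $x^{\alpha+1}$ is a finite hyperproduct of elements of $H$, the $\mathcal{C}$-hyperideal property gives $x^{\alpha+1}\subseteq P$, contradicting $x\notin rad(P)=P$. Thus $0\notin x^{\alpha}\circ x\subseteq\langle x^{\alpha+1}\rangle$, and since $\langle x^{\alpha+1}\rangle$ is weakly $(\alpha,\beta)$-prime by hypothesis, either $x^{\beta}\subseteq\langle x^{\alpha+1}\rangle$ or $x\in\langle x^{\alpha+1}\rangle$.

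From here the argument runs as in the $(\alpha,\beta)$-prime case, and should be routine. In the first case, write $x^{\beta}\subseteq x^{\alpha+1}\circ a$ for some $a\in H$; since $\alpha\geq\beta$, the exponent $\alpha-\beta+1$ is a positive integer, so $x^{\alpha+1}\circ a=x^{\beta}\circ(x^{\alpha-\beta+1}\circ a)$, whence $0\in(x^{\beta}-x^{\alpha+1}\circ a)\cap P$, so $x^{\beta}-x^{\alpha+1}\circ a\subseteq P$ (as $P$ is a $\mathcal{C}$-hyperideal), and $x^{\beta}\circ(1-x^{\alpha-\beta+1}\circ a)\subseteq x^{\beta}-x^{\alpha+1}\circ a\subseteq P$; primeness of $P$ with $x^{\beta}\nsubseteq P$ then gives $1-x^{\alpha-\beta+1}\circ a\subseteq P\subseteq Q$, hence $1\in Q$, a contradiction. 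In the second case, write $x\in x^{\alpha+1}\circ b$ for some $b\in H$; then $0\in(x-x^{\alpha+1}\circ b)\cap P$, so $x-x^{\alpha+1}\circ b\subseteq P$, and $x\circ(1-x^{\alpha}\circ b)\subseteq x-x^{\alpha+1}\circ b\subseteq P$; primeness of $P$ with $x\notin P$ gives $1-x^{\alpha}\circ b\subseteq P\subseteq Q$, again $1\in Q$, a contradiction.

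The only essential use of the hypothesis $\alpha\geq\beta$ is to make the factorisation $x^{\alpha+1}=x^{\beta}\circ x^{\alpha-\beta+1}$ in the first case legitimate; the substantive point, as noted, is engineering $0\notin x^{\alpha+1}$ so that weak primeness is applicable at all, which is where the primeness of $P$ (via $rad(P)=P$) and the $\mathcal{C}$-hyperideal property enter.
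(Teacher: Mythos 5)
Your proposal is correct and follows essentially the same route as the paper: pass to the principal hyperideal $\langle x^{\alpha+1}\rangle$, apply weak $(\alpha,\beta)$-primeness to $x^{\alpha}\circ x$, and in each of the two resulting cases use primeness of $P$ together with the factorisation permitted by $\alpha\geq\beta$ to force $1\in Q$. In fact you make explicit two points the paper leaves tacit, namely the justification that $0\notin x^{\alpha}\circ x$ (via the $\mathcal{C}$-hyperideal property and $rad(P)=P$) and the details of the second case, so nothing is missing.
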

\begin{proof}
Suppose that  $I $ is a prime $\mathcal{C}$-hyperideal such that it is not maximal.  Let $J$ be a proper hyperideal such that $I \subset J$. Take any $x \in J-I$. Put $P=\langle x^{\alpha+1} \rangle$. Therefore we have $0 \notin x^{\alpha} \circ x \subseteq P$. By the hypothesis, we have $x^{\beta} \subseteq P$ or $x \in P$. In the first possibilty, we obtan $x^{\beta} \subseteq x^{\alpha+1} \circ r$ for some $r \in H$  which means $0 \in x^{\beta} - x^{\alpha+1} \circ r \cap I$. Then $x^{\beta} - x^{\alpha+1} \circ r  \subseteq I$, also $x^{\beta} \circ (1-x^{\alpha-\beta+1}\circ r) \subseteq x^{\beta} - x^{\alpha+1} \circ r $. Hence $x^{\beta} \circ (1-x^{\alpha-\beta+1}\circ r) \subseteq I$. Since $x^{\beta} \nsubseteq  I$ and $I $ is a prime hyperideal, we get $1-x^{\alpha-\beta+1}\circ r \subseteq I \subset J$ which means $1 \in J$, a contradiction. In the second possibilty, we get a contradiction by a similar argument. Consequenntly, every prime $\mathcal{C}$-hyperideal of $H$ is maximal.
\end{proof}
 Let $P$ be a weakly $(\alpha,\beta)$-prime $\mathcal{C}$-hyperideal of $H$ and $x,y \in H$. We say that $(x,y)$ is an $(\alpha,\beta)$-zero of $P$ if $0 \in x^{\alpha} \circ y$, $x^{\beta} \nsubseteq P$ and $y \notin P$.
 \begin{proposition}\label{10}
 Let $P$ be a weakly $(\alpha,\beta)$-prime $\mathcal{C}$-hyperideal of $H$ and  and $(x,y)$ be an $(\alpha,\beta)$-zero of $P$ where $\alpha,\beta\in \mathbb{Z}^+$. Then the following hold: 
 \begin{itemize}
\item[\rm(i)]~$0 \in (x+a)^{\alpha} \circ y$ for all $a \in P$.
\item[\rm(ii)]~ $0 \in x^{\alpha} \circ (y+a)$ for all $a \in P$.
\item[\rm(iii)]~ If the hyperideal zero of $H$ is a strong $\mathcal{C}$-hyperideal, then $ x^{\alpha} \circ a=0$ for all $a \in P$.
 \end{itemize}
 \end{proposition}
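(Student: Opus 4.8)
The plan is to base all three parts on one common observation: $x^{\alpha}\circ y\subseteq P$. Indeed $x^{\alpha}\circ y=\underbrace{x\circ\cdots\circ x}_{\alpha}\circ y$ is a finite product of elements of $H$, hence a member of $\mathcal{C}$, and since $0\in x^{\alpha}\circ y$ (as $(x,y)$ is an $(\alpha,\beta)$-zero) and $0\in P$ (every hyperideal contains $0$), we have $(x^{\alpha}\circ y)\cap P\neq\varnothing$; because $P$ is a $\mathcal{C}$-hyperideal this forces $x^{\alpha}\circ y\subseteq P$. I would record this first and use it throughout.

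For (i), I would argue by contradiction, assuming $0\notin (x+a)^{\alpha}\circ y$. Expanding $(x+a)^{\alpha}\circ y$ by repeated sub-distributivity produces a sum of products of $\alpha$ factors, each factor being $x$ or $a$, times $y$; the only summand having no factor $a$ is $x^{\alpha}\circ y$, which lies in $P$ by the paragraph above, while every other summand carries a factor $a\in P$ and so lies in $P$ as well. Hence $(x+a)^{\alpha}\circ y\subseteq P$. Since $P$ is weakly $(\alpha,\beta)$-prime and $0\notin (x+a)^{\alpha}\circ y$, we get $(x+a)^{\beta}\subseteq P$ or $y\in P$; the latter is impossible because $(x,y)$ is an $(\alpha,\beta)$-zero, so $(x+a)^{\beta}\subseteq P$. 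Now I would use the exact identity $x=(x+a)+(-a)$ in the group $(H,+)$ and expand $x^{\beta}$ the same way: every summand other than $(x+a)^{\beta}$ carries a factor $-a\in P$, so $x^{\beta}\subseteq (x+a)^{\beta}+P\subseteq P$, contradicting $x^{\beta}\nsubseteq P$. This yields $0\in (x+a)^{\alpha}\circ y$.

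Part (ii) is shorter and of the same flavour. If $0\notin x^{\alpha}\circ(y+a)$, then from $x^{\alpha}\circ(y+a)\subseteq x^{\alpha}\circ y+x^{\alpha}\circ a\subseteq P$ (using $x^{\alpha}\circ y\subseteq P$, $a\in P$, and that $P$ is a hyperideal) together with the weakly $(\alpha,\beta)$-prime property and $x^{\beta}\nsubseteq P$, we would obtain $y+a\in P$, hence $y=(y+a)+(-a)\in P$, a contradiction; so $0\in x^{\alpha}\circ(y+a)$.

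For (iii), I would feed (ii) into the strong $\mathcal{C}$-hyperideal hypothesis on the zero hyperideal. By (ii), $0\in x^{\alpha}\circ(y+a)\subseteq x^{\alpha}\circ y+x^{\alpha}\circ a$, and the set $E:=x^{\alpha}\circ y+x^{\alpha}\circ a$ is a sum of two members of $\mathcal{C}$, hence $E\in\mathfrak{U}$; since $0\in E\cap(0)$, the strong $\mathcal{C}$-hyperideal property gives $E\subseteq(0)$, i.e. $E=\{0\}$. Finally, since $0\in x^{\alpha}\circ y$, for every $v\in x^{\alpha}\circ a$ we have $v=0+v\in E=\{0\}$, so $x^{\alpha}\circ a=0$. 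The step I expect to be the real point is precisely this last one: the strong $\mathcal{C}$-hyperideal axiom is exactly what upgrades "$0$ lies in a sum of $\mathcal{C}$-sets" to the pointwise conclusion $x^{\alpha}\circ a=0$, and without it parts (i) and (ii) only deliver the weaker membership statements. Parts (i) and (ii) themselves are routine once one commits to the contradiction argument and is careful to use sub-distributivity in both directions, passing between $x$ and $x+a$ via the exact group identity $x=(x+a)+(-a)$.
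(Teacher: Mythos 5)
Your proof is correct and follows essentially the same route as the paper: deduce $x^{\alpha}\circ y\subseteq P$ from $0\in x^{\alpha}\circ y$ and the $\mathcal{C}$-hyperideal property, expand $(x+a)^{\alpha}\circ y$ (resp. $x^{\alpha}\circ(y+a)$) by sub-distributivity to land in $P$, apply the weakly $(\alpha,\beta)$-prime hypothesis, and in (iii) use the strong $\mathcal{C}$-hyperideal property of the zero hyperideal on $x^{\alpha}\circ y+x^{\alpha}\circ a$ to force it to equal $\{0\}$. Your only addition is to spell out, via $x=(x+a)+(-a)$, why $(x+a)^{\beta}\subseteq P$ would force $x^{\beta}\subseteq P$ — a step the paper merely asserts — which is a welcome clarification but not a different argument.
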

 \begin{proof}
(i) Let  $(x,y)$ be an $(\alpha,\beta)$-zero of $P$ and $0 \notin (x+a)^{\alpha} \circ y$ for some $a \in P$. Since $0 \in x^{\alpha} \circ y$ and $P$ is a $\mathcal{C}$-hyperideal of $H$, we conclude that $ x^{\alpha} \circ y \subseteq P$. Therefore $0 \notin (x+a)^{\alpha} \circ y \subseteq x^{\alpha} \circ y +\Sigma_{i=1}^{\alpha} \tbinom{\alpha}{i}x^{\alpha-i}\circ a^i\circ y \subseteq P.$ Since $P$ is a weakly $(\alpha,\beta)$-prime hyperideal of $H$ and $y \notin P$, we get $(x+a)^{\beta} \subseteq P$. On the other hand, since $(x,y)$ is an $(\alpha,\beta)$-zero of $P$ and $x^{\beta} \nsubseteq P$, we get the result that $(x+a)^{\beta} \nsubseteq P$ which is a contradiction. Thus $0 \in (x+a)^{\alpha} \circ y$ for all $a \in P$.

(ii) Let $0 \notin x^{\alpha} \circ (y+a)$ for some  $a \in P$. Hence $0 \notin x^{\alpha} \circ (y+a) \subseteq x^{\alpha} \circ y+x^{\alpha} \circ a \subseteq P$ as $P$ is a $\mathcal{C}$-hyperideal of $H$. Since $P$ is a weakly $(\alpha,\beta)$-prime hyperideal of $H$ and $x^{\beta} \nsubseteq P$, we obtain $y+a \in P$ and so $y \in P$ which is a contradiction. Consequently,  $0 \in x^{\alpha} \circ (y+a)$ for all $a \in P$.

(iii) Assume that $x^{\alpha} \circ a \neq 0$ for some $a \in P$. Then there exists $u \in x^{\alpha} \circ a$ such that $u \neq 0$.  By (ii) we have $0 \in x^{\alpha} \circ (y+a)$. Since the hyperideal zero of $H$ is a strong $\mathcal{C}$-hyperideal and $0 \in x^{\alpha} \circ (y+a)\subseteq x^{\alpha} \circ y+x^{\alpha} \circ a$, we get $x^{\alpha} \circ y+x^{\alpha} \circ a=0$. Moreover,  since $0 \in x^{\alpha} \circ y$ and $0 \neq u \in x^{\alpha} \circ a$, we get $u = 0+u \in x^{\alpha} \circ y+x^{\alpha} \circ a$ which is a contradiction. Hence $x^{\alpha} \circ a=0$ for all $a \in P$.
 \end{proof}
Recall from \cite{ameri} that an element $x \in G$ is nilpotent if there exists an integer $t$ such that $0 \in x^t$. The set of all nilpotent elements of $G$ is denoted by $\Upsilon$. 
\begin{theorem}\label{11}
 Let $P$ be a weakly $(\alpha,\beta)$-prime $\mathcal{C}$-hyperideal of a strongly distributive multiplicative hyperring $H$ and $(x,y)$ be an $(\alpha,\beta)$-zero of $P$ where $\alpha,\beta\in \mathbb{Z}^+$. Then 
\begin{itemize}
\item[\rm(i)]~ If the hyperideal zero of $H$ is a strong $\mathcal{C}$-hyperideal, then $x\circ a \subseteq  \Upsilon$ for all $a \in P$.
\item[\rm(ii)]~ If the hyperideal zero of $H$ is a $\mathcal{C}$-hyperideal, $y \circ a \subseteq  \Upsilon $ for all $a \in P$.
 \end{itemize}
 \end{theorem}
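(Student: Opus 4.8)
The proof will rest on three ingredients already at hand: Proposition \ref{10} on the behaviour of an $(\alpha,\beta)$-zero; the fact that any finite product of elements of $H$ containing $0$ must equal $\{0\}$ (the zero hyperideal being a $\mathcal{C}$-hyperideal); and the binomial expansion $(s+t)^{n}=\sum_{i=0}^{n}\tbinom{n}{i}s^{i}\circ t^{n-i}$ made available by strong distributivity. For part (i): since the zero hyperideal is a \emph{strong} $\mathcal{C}$-hyperideal, Proposition \ref{10}(iii) gives $x^{\alpha}\circ b=\{0\}$ for every $b\in P$. Fix $a\in P$ and $u\in x\circ a$. By commutativity and associativity $u^{\alpha}\subseteq(x\circ a)^{\alpha}=x^{\alpha}\circ a^{\alpha}$, and since $a\in P$ forces $a^{\alpha}\subseteq P$ we get $x^{\alpha}\circ a^{\alpha}=\bigcup_{t\in a^{\alpha}}x^{\alpha}\circ t=\{0\}$. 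Hence $u^{\alpha}=\{0\}$, so $0\in u^{\alpha}$ and $u\in\Upsilon$; thus $x\circ a\subseteq\Upsilon$. (Part (i) does not really use strong distributivity.)

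For part (ii) I would first make two reductions. Since $0\in x^{\alpha}\circ y$ is a product of elements of $H$, $x^{\alpha}\circ y=\{0\}$. Next, Proposition \ref{10}(ii) gives $0\in x^{\alpha}\circ(y+a)$ for all $a\in P$, and by strong distributivity $x^{\alpha}\circ(y+a)\subseteq x^{\alpha}\circ y+x^{\alpha}\circ a=x^{\alpha}\circ a$, so $0\in x^{\alpha}\circ a$ and therefore $x^{\alpha}\circ a=\{0\}$ for every $a\in P$ — the analogue of \ref{10}(iii), but needing only the weaker hypothesis on the zero hyperideal, at the price of using strong distributivity. In particular $x^{\alpha}\circ v\subseteq x^{\alpha}\circ y\circ a=\{0\}$ for every $v\in y\circ a$.

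Now fix $a\in P$ and $v\in y\circ a$, so $v\in P$. The plan is a shifting trick. By Proposition \ref{10}(i) (perturbing by $v\in P$), $0\in(x+v)^{\alpha}\circ y$; moreover $(x+v)^{\beta}\nsubseteq P$, because the cross terms of $(x+v)^{\beta}=x^{\beta}+\sum_{i\geq1}\tbinom{\beta}{i}x^{\beta-i}\circ v^{i}$ lie in $P$, so $(x+v)^{\beta}\subseteq P$ would give $x^{\beta}\subseteq P$; and $y\notin P$. Hence $(x+v,y)$ is again an $(\alpha,\beta)$-zero of $P$, and running the two reductions with $(x+v,y)$ in place of $(x,y)$ yields $(x+v)^{\alpha}\circ b=\{0\}$ for all $b\in P$, in particular $(x+v)^{\alpha}\circ v=\{0\}$. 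Expanding this by strong distributivity and discarding the term $x^{\alpha}\circ v=\{0\}$, one writes $v^{\alpha+1}$ as a combination of products $x^{i}\circ v^{\alpha-i+1}$ with $i\geq1$ (so each having a factor of $x$ and $v$-exponent $\geq2$). Raising to the $\alpha$-th power, every resulting summand is a product with at least $\alpha$ factors $x$ times a power $v^{m}$ with $m\geq1$, hence of the form $x^{(\geq\alpha)}\circ(\text{element of }P)=\{0\}$ by the reduction $x^{\alpha}\circ P=\{0\}$. Thus $v^{\alpha(\alpha+1)}=\{0\}$, so $v\in\Upsilon$, and $y\circ a\subseteq\Upsilon$.

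The easy parts are (i) together with the two reductions in (ii). The main obstacle is the final step of (ii): passing from the single relation $(x+v)^{\alpha}\circ v=\{0\}$ to the nilpotence of $v$. Because there is no cancellation available in $H$, this has to be done by careful binomial bookkeeping — bootstrapping $(x+v,y)$ into a fresh $(\alpha,\beta)$-zero so that $(x+v)^{\alpha}$ (and every such shift) annihilates all of $P$, expanding $v^{\alpha+1}$ as a combination of terms divisible by $x$ and by $v^{2}$, and finally using that $x^{\alpha}$ kills every element of $P$ to collapse a high power of $v$ to $\{0\}$. (When $\alpha=1$ the sum is empty and this shortcuts to $v^{2}=\{0\}$ at once.) Strong distributivity is exactly what promotes the distributive inclusions to equalities and so makes the binomial manipulation legitimate.
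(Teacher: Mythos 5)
Your proof is correct, and while part (i) coincides with the paper's (one-line) argument via Proposition \ref{10}(iii), your part (ii) takes a genuinely different route. The paper's proof of (ii) is much shorter: from Proposition \ref{10}(i) and $0\in x^{\alpha}\circ y$, the $\mathcal{C}$-hyperideal property of the zero hyperideal forces $((x+a)\circ y)^{\alpha}=\{0\}$ and $(x\circ y)^{\alpha}=\{0\}$, so both $(x+a)\circ y$ and $x\circ y$ lie in $\Upsilon$; strong distributivity then gives $a\circ y=((x+a)-x)\circ y=(x+a)\circ y-x\circ y\subseteq \Upsilon$, a step which implicitly uses that $\Upsilon$ is closed under differences (legitimate here, since with the zero hyperideal a $\mathcal{C}$-hyperideal one has $\Upsilon=rad(0)$, a hyperideal, though the paper leaves this unstated). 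You instead prove the stronger annihilation statement $x^{\alpha}\circ a=\{0\}$ for all $a\in P$ under the mere $\mathcal{C}$-hypothesis (in effect showing that the conclusion of Proposition \ref{10}(iii) does not need the strong $\mathcal{C}$-hypothesis once $x^{\alpha}\circ y=\{0\}$ is available), bootstrap $(x+v,y)$ into a new $(\alpha,\beta)$-zero for each $v\in y\circ a$, and then do binomial bookkeeping to collapse $v^{\alpha(\alpha+1)}$ to $\{0\}$; all the set-equalities you invoke are indeed justified by strong distributivity together with the facts that $0\circ z\subseteq\{0\}$ and that products in ${\bf C}$ containing $0$ equal $\{0\}$. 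What each approach buys: the paper's argument is a three-line reduction but rests on the subtraction-closedness of $\Upsilon$; yours is longer and more computational but entirely element-wise, avoids any appeal to $\Upsilon$ being a hyperideal, produces an explicit nilpotency exponent $\alpha(\alpha+1)$ for the elements of $y\circ a$, and isolates the auxiliary fact $x^{\alpha}\circ P=\{0\}$, which is of independent interest.
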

 \begin{proof}
(i) Since $x^{\alpha} \circ a=0$ for all $a \in P$, by Proposition \ref{10} (3), we get the result that $x \circ a \subseteq  \Upsilon $.

(ii) Take any $a \in P$. Since $0 \in (x+a)^{\alpha} \circ y$ by Proposition \ref{10}(1) and the hyperideal zero of $H$ is a $\mathcal{C}$-hyperideal, we get $ ((x+a) \circ y)^{\alpha}=0$. This means that $(x+a) \circ y \subseteq   \Upsilon $. Also, since $0 \in x^{\alpha} \circ y$ and the hyperideal zero of $H$ is a $\mathcal{C}$-hyperideal, we have $ (x \circ y)^{\alpha}=0$ which implies $x \circ y \subseteq \Upsilon $. Since $H$ is a strongly distributive multiplicative hyperring, we have  $a \circ y=  (x+a) \circ y -x \circ y  \subseteq  \Upsilon $, as needed.
\end{proof}
 \begin{theorem}
 Let $\{P_i\}_{i \in I}$ be a family of weakly $(\alpha,\beta)$-prime hyperideals of $H$ and  $D(P_i)=\{x \in H \ \vert \ x^{\beta} \subseteq P_i\}$ for all $i \in I$ where $\alpha,\beta\in \mathbb{Z}^+$. If $D(P_i)=D(P_j)$ for all $i,j \in I$, then $\bigcap_{i \in I} P_i$ is a weakly $(\alpha,\beta)$-prime hyperideal of $H$.
 \end{theorem}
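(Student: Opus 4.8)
The plan is to argue directly from the definition of a weakly $(\alpha,\beta)$-prime hyperideal, with almost all of the work being a clean case split. Write $P=\bigcap_{i\in I}P_i$. First I would record that $P$ is a proper hyperideal of $H$: an arbitrary intersection of hyperideals is again a hyperideal, and $P\subseteq P_{i_0}\subsetneq H$ for any fixed $i_0\in I$, so $P\neq H$. Then I take $x,y\in H$ with $0\notin x^{\alpha}\circ y\subseteq P$, and the goal is to show $x^{\beta}\subseteq P$ or $y\in P$.

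Since $x^{\alpha}\circ y\subseteq P\subseteq P_i$ and $0\notin x^{\alpha}\circ y$ for every $i\in I$, and each $P_i$ is weakly $(\alpha,\beta)$-prime, for each index $i$ we get $x^{\beta}\subseteq P_i$ or $y\in P_i$; in the notation of the statement this says $x\in D(P_i)$ or $y\in P_i$ for every $i\in I$.

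Now I would split into two cases. If $y\in P_i$ for every $i\in I$, then $y\in\bigcap_{i\in I}P_i=P$ and we are done. Otherwise there exists $j\in I$ with $y\notin P_j$, and the dichotomy for the index $j$ forces $x\in D(P_j)$. This is the point at which the hypothesis $D(P_i)=D(P_j)$ for all $i,j\in I$ is used: it yields $x\in D(P_i)$, i.e. $x^{\beta}\subseteq P_i$, for every $i\in I$, and hence $x^{\beta}\subseteq\bigcap_{i\in I}P_i=P$. In either case $x^{\beta}\subseteq P$ or $y\in P$, so $P$ is weakly $(\alpha,\beta)$-prime, as claimed.

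I do not expect a genuine obstacle here; the only subtlety worth stating carefully is that the case analysis must be performed uniformly over the whole index set rather than index by index — without extra information one could only deduce $x^{\beta}\subseteq P_i$ for those $i$ with $y\notin P_i$. The constancy of $D(P_i)$ across $I$ is precisely the device that upgrades ``$x^{\beta}\subseteq P_j$ for a single bad index $j$'' to ``$x^{\beta}\subseteq P_i$ for all $i$'', and making that transfer explicit is the heart of the argument.
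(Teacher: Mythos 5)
Your proposal is correct and follows essentially the same route as the paper: assume $y\notin\bigcap_{i\in I}P_i$, pick $j$ with $y\notin P_j$, apply the weak $(\alpha,\beta)$-primeness of $P_j$ to get $x\in D(P_j)$, and use $D(P_i)=D(P_j)$ to transfer $x^{\beta}\subseteq P_i$ to all indices. The only additions are the (correct) observation that the intersection is a proper hyperideal and a slightly more explicit case split, neither of which changes the argument.
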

 \begin{proof}
 Assume that $0 \notin x^{\alpha} \circ y \subseteq \bigcap_{i \in I} P_i$ for $x,y \in H$ but $y \notin  \bigcap_{i \in I} P_i$. Therefore we conclude that  $y \notin P_j$ for some $j \in I$. Since $P_j$ is a weakly $(\alpha,\beta)$-prime hyperideal of $H$ and $0 \notin x^{\alpha} \circ y\subseteq P_j$, we get the result that $  x^{\beta} \subseteq  P_j$. This implies that $x \in D(P_j)$ and so $x \in D(P_i)$ for all $i \in I$ by the hypothesis. Then $x ^{\beta} \subseteq \bigcap_{i \in I} P_i$. This shows that $\bigcap_{i \in I} P_i$ is a a weakly $(\alpha,\beta)$-prime hyperideal of $H$.
 \end{proof}
Let $I$ be  a finite sum of finite products of elements of $H$. Consider the relation $\gamma$ on a multiplicative hyperring $H$ defined as $x \gamma y$ if and only if $\{x,y\} \subseteq I$, namely,  
$x \gamma y $ if and only if    $\{x,y\} \subseteq \sum_{j \in J} \prod_{i \in I_j} z_i$ for some $ z_1, ... , z_n \in H$ and $ I_j, J \subseteq \{1,... , n\}$.  $\gamma^{\ast}$ denotes the transitive closure of $\gamma$. The relation $\gamma^{\ast}$ is the smallest equivalence relation on $H$ such that the set of all equivalence classes, i.e., the
quotient $G/\gamma^{\ast}$,  is a fundamental ring. Assume that $\Sigma$
is the set of all finite sums of products of elements of $H$.  We can rewrite the
definition of $\gamma ^{\ast}$  on $H$, namely, 
$x\gamma^{\ast}y$ if and only if  there exist  $ z_1, ... , z_n \in H$ such that $z_1 = x, z_{n+1 }= y$ and $u_1, ... , u_n \in \Sigma$ where
$\{z_i, z_{i+1}\} \subseteq u_i$ for $1 \leq i \leq n$.
Suppose that $\gamma^{\ast}(x)$ is the equivalence class containing $x \in H$. Define $\gamma ^{\ast}(x) \oplus \gamma^{\ast}(y)=\gamma ^{\ast}(z)$ for every  $z \in \gamma^{\ast}(x) + \gamma ^{\ast}(y)$ and $\gamma ^{\ast}(x) \odot \gamma ^{\ast}(y)=\gamma ^{\ast}(w)$ for every  $w \in  \gamma^{\ast}(x) \circ \gamma ^{\ast}(y)$. Then $(H/\gamma ^{\ast},\oplus,\odot)$ is a ring called a fundamental ring of $H$ \cite{sorc4}.
\begin{theorem}\label{7}
Assume that $P$ is a hyperideal of $H$. Then  $P$  is a weakly  $(\alpha,\beta)$-prime hyperideal of $(H,+,\circ)$ if and only if $P/\gamma ^{\ast}$ is  a weakly  $(\alpha,\beta)$-prime ideal of $(H/\gamma ^{\ast},\oplus,\odot)$. 
\end{theorem}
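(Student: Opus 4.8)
The plan is to transport the statement across the canonical projection $\pi\colon H\to H/\gamma^{\ast}$, $x\mapsto\gamma^{\ast}(x)$, abbreviating $\bar x:=\gamma^{\ast}(x)$. By the construction recalled above, $\pi$ is a surjective good homomorphism onto the ordinary ring $H/\gamma^{\ast}$, and, crucially, it flattens hyperoperations: for any $x,y\in H$ and $n\in\mathbb{Z}^{+}$ the set $x^{n}\circ y$ is a single ``product'' in the defining relation for $\gamma$, so all its elements are $\gamma$-related and $\pi(x^{n}\circ y)=\{\bar x^{\,n}\odot\bar y\}$; likewise $\pi(x^{n})=\{\bar x^{\,n}\}$. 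Moreover $P/\gamma^{\ast}=\pi(P)$ is an ideal of $H/\gamma^{\ast}$, proper exactly when $P$ is (since $1\in P$ iff $\bar 1\in\pi(P)$). Before the two implications I would isolate two bridge facts: (a) a hyperideal is a union of $\gamma^{\ast}$-classes, so $\pi^{-1}(\pi(P))=P$, which lets one test membership in $P$ after projecting; and (b) for $x,y\in H$ and $n\in\mathbb{Z}^{+}$, $0\in x^{n}\circ y$ if and only if $\bar x^{\,n}\odot\bar y=0$ in $H/\gamma^{\ast}$. Fact (a) follows from the hyperideal axioms together with the definition of $\gamma$, and the ``only if'' half of (b) is immediate, since then $0$ and any $w\in x^{n}\circ y$ lie in a common hyperproduct, hence in a common $\gamma^{\ast}$-class.

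For $(\Rightarrow)$, suppose $P$ is weakly $(\alpha,\beta)$-prime in $H$ and take $\bar x,\bar y\in H/\gamma^{\ast}$ with $0\neq\bar x^{\,\alpha}\odot\bar y\in P/\gamma^{\ast}$. Pick $x,y\in H$ lifting $\bar x,\bar y$. Then $\pi(x^{\alpha}\circ y)=\{\bar x^{\,\alpha}\odot\bar y\}\subseteq\pi(P)$, so $x^{\alpha}\circ y\subseteq\pi^{-1}(\pi(P))=P$ by (a), and $\bar x^{\,\alpha}\odot\bar y\neq0$ forces $0\notin x^{\alpha}\circ y$ by the easy half of (b). The hypothesis yields $x^{\beta}\subseteq P$ or $y\in P$, whence $\bar x^{\,\beta}=\pi(x^{\beta})\subseteq P/\gamma^{\ast}$ or $\bar y=\pi(y)\in P/\gamma^{\ast}$, so $P/\gamma^{\ast}$ is weakly $(\alpha,\beta)$-prime. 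For $(\Leftarrow)$, suppose $P/\gamma^{\ast}$ is weakly $(\alpha,\beta)$-prime and let $0\notin x^{\alpha}\circ y\subseteq P$ in $H$. Then $\bar x^{\,\alpha}\odot\bar y\in\pi(P)$ and $\bar x^{\,\alpha}\odot\bar y\neq0$ by (b), so $\bar x^{\,\beta}\in P/\gamma^{\ast}$ or $\bar y\in P/\gamma^{\ast}$; pulling back with (a) gives $x^{\beta}\subseteq\pi^{-1}(\pi(P))=P$ or $y\in P$. Hence $P$ is weakly $(\alpha,\beta)$-prime in $H$.

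The bookkeeping with $\pi$ is routine; the substantive point is bridge fact (b), and precisely its nontrivial direction, namely that a hyperproduct avoiding $0$ continues to avoid $0$ after projection to $H/\gamma^{\ast}$ (equivalently, that $x^{\alpha}\circ y$ meets the class $\gamma^{\ast}(0)$ only if it already contains $0$). I expect this to be the main obstacle: to establish it one must unwind the definition of $\gamma^{\ast}$ and use the minimality/strong regularity of $\gamma^{\ast}$ together with the description of $\gamma^{\ast}(0)$ as the class of the additive identity, rather than merely the formal homomorphism properties of $\pi$; here the hypothesis that $H$ carries an identity $1$ is what keeps the classes of $0$ and $1$ in $H/\gamma^{\ast}$ from collapsing. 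Fact (a) is softer but should still be stated and proved explicitly, as it is what makes the membership assertions reversible across $\pi$ in both directions.
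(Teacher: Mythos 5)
Your forward implication is, step for step, the paper's own argument (and in fact the paper's proof, as written, establishes only that direction): lift $\bar x,\bar y$ to $a,b\in H$, use that all elements of $a^{\alpha}\circ b$ lie in a single $\gamma$-class so that $\gamma^{\ast}(a)^{\alpha}\odot\gamma^{\ast}(b)=\gamma^{\ast}(a^{\alpha}\circ b)$, transfer the hypothesis to $H$, and project back. Both you and the paper also invoke your bridge fact (a), $\pi^{-1}(\pi(P))=P$, at the same spot (to pass from $\gamma^{\ast}(a^{\alpha}\circ b)\in P/\gamma^{\ast}$ to $a^{\alpha}\circ b\subseteq P$), but your assertion that (a) ``follows from the hyperideal axioms together with the definition of $\gamma$'' is not correct: saturation of a hyperideal under $\gamma^{\ast}$ is exactly a strong $\mathcal{C}$-hyperideal-type condition, which this paper treats as an additional hypothesis elsewhere, not a consequence of the axioms. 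Indeed, in the paper's own $(\mathbb{Z}_8,\oplus,\circ)$ example the weakly $(3,1)$-prime hyperideal $Q=\{\bar 0,\bar 4\}$ is not a union of $\gamma^{\ast}$-classes: $\bar 2\circ\bar 2=\{\bar 0,\bar 4\}$ and $\bar 1\circ\bar 1=\{\bar 1,\dots,\bar 7\}$ force every element of $\mathbb{Z}_8$ into the class of $\bar 0$. So (a) must be assumed (the paper shares this omission), it cannot be derived.

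The genuine gap is precisely the step you flag as the main obstacle, the ``hard'' half of fact (b), on which your entire converse rests. From $0\notin x^{\alpha}\circ y$ one cannot conclude $\bar x^{\alpha}\odot\bar y\neq \bar 0$ in $H/\gamma^{\ast}$: the zero of the fundamental ring is the whole class $\gamma^{\ast}(0)$, which in general properly contains $0$, so a hyperproduct can avoid $0$ and still be sent to the zero class, and then the weak primeness of $P/\gamma^{\ast}$ gives no information at all (weakly prime ideals say nothing about products equal to zero). Your proposed rescue — that the presence of an identity $1$ keeps the classes of $0$ and $1$ from collapsing — is both false and beside the point: in the same $\mathbb{Z}_8$ example $1$ is an identity and yet $\gamma^{\ast}(0)=\gamma^{\ast}(1)$; and even when $\bar 0\neq\bar 1$ nothing forces $\gamma^{\ast}(0)=\{0\}$, which is what your argument actually needs. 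So the backward direction does not go through as written; it would require an extra hypothesis guaranteeing $\gamma^{\ast}(0)=\{0\}$ (or a separate treatment of the case $\bar x^{\alpha}\odot\bar y=\bar 0$). For comparison, the paper never confronts this: its proof simply stops after the forward implication, leaving the stated converse unproved, so your attempt goes beyond the paper exactly where the real difficulty lies.
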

\begin{proof}
 Let $0 \neq \underbrace{ x \odot \cdots \odot x}_{\alpha} \odot y \in P/\gamma ^{\ast}$  for some $x,y \in H/\gamma ^{\ast}$. Hence we have   $x =\gamma^{\ast}(a)$ and $y =\gamma^{\ast}(b)$ for some $a,b \in H$. This means  that $\underbrace{x\odot \cdots \odot x}_{\alpha} \odot y= \underbrace{\gamma^{\ast}(a) \odot \cdots \odot \gamma^{\ast}(a)}_{\alpha}  \odot \gamma^{\ast}(b) =\gamma^{\ast}(a^{\alpha} \circ b)$. Since $\gamma^{\ast}(0) \neq 
\gamma^{\ast}(a^{\alpha} \circ b) \in P/\gamma^{\ast}$, we get $0 \notin a^{\alpha} \circ b \subseteq  P$. Since $P$ is a weakly  $(\alpha,\beta)$-prime hyperideal of $H$, we get the result that  $a^{\beta} \subseteq P$ or $b \in P$. This implies that  $\underbrace{x \odot \cdots \odot x}_{\beta}=\underbrace{ \gamma^{\ast}(a) \odot \cdots \odot \gamma^{\ast}(a)}_{\beta}=\gamma^{\ast}(a^{\beta})\in P/\gamma ^{\ast}$ or $y=\gamma^{\ast}(a) \in P/\gamma ^{\ast}$. Consequently,   $P/\gamma ^{\ast}$ is a weakly  $(\alpha,\beta)$-prime  ideal of $H/\gamma ^{\ast}$.
\end{proof}
Let $(H_1,+_1,\circ_1)$ and $(H_2,+_2,\circ_2)$ be two multiplicative hyperrings with nonzero identity.  The set $H_1 \times H_2$  with the operation $+$ and the hyperoperation $\circ$  defined  as

$(x_1,x_2)+(y_1,y_2)=(x_1+_1y_1,x_2+_2y_2)$

$(x_1,x_2) \circ (y_1,y_2)=\{(x,y) \in H_1 \times H_2 \ \vert \ x \in x_1 \circ_1 y_1, y \in x_2 \circ_2 y_2\}$ \\
is a multiplicative hyperring \cite{ul}. Now, we present some characterizations of weakly $(\alpha,\beta)$-prime hyperideals on cartesian product of commutative multiplicative hyperring.

\begin{proposition} \label{malek}
Let $(H_1, +_1,\circ _1)$ and $(H_2,+_2,\circ_2)$ be two multiplicative hyperrings with  scalar identities $1_{H_1}$ and $1_{H_2}$,  respectively,  $P$ a proper nonzero hyperideal of   $H_1 \times H_2$, and $\alpha,\beta \in \mathbb{Z}^+$. If  $P$ is weakly $(\alpha,\beta)$-prime, then it has one of the following cases:
 \begin{itemize}
\item[\rm(i)]~ $P=P_1 \times H_2$ such that $P_1$ is an $(\alpha,\beta)$-prime hyperideal of $H_1 $.
\item[\rm(ii)]~ $P=H_1 \times P_2$ such that $P_2$ is an  $(\alpha,\beta)$-prime hyperideal of $H_2$.
 \end{itemize}
\end{proposition}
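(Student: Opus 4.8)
The plan is to first reduce to the standard structural description of hyperideals of a product: every hyperideal of $H_1 \times H_2$ has the form $P = P_1 \times P_2$, where $P_i$ is a hyperideal of $H_i$. This holds because, using the scalar identities and the identity $r \circ 0 = \{0\}$ (equivalently, the fact that $\{0\}$ is a hyperideal), one gets $(x_1,0_{H_2}) = (x_1,x_2)\circ(1_{H_1},0_{H_2}) \in P$ and $(0_{H_1},x_2) \in P$ whenever $(x_1,x_2) \in P$, so $P$ equals the product of its two projections, and each projection is readily checked to be a hyperideal. Since $P$ is proper, at least one $P_i$ is proper, and since $P \neq \{0\}$, at least one $P_i$ is nonzero. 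The two conclusions (i) and (ii) then reduce to: (a) not both $P_1$ and $P_2$ can be proper; and (b) if $P = P_1 \times H_2$ with $P_1$ proper then $P_1$ is $(\alpha,\beta)$-prime (and symmetrically for $P = H_1 \times P_2$).

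For (a), I would argue by contradiction. Assume $P_1 \subsetneq H_1$ and $P_2 \subsetneq H_2$, and (after possibly swapping the two factors, which is harmless since (i) and (ii) are symmetric) pick $0 \neq a \in P_1$. Consider $x = (1_{H_1}, 0_{H_2})$ and $y = (a, 1_{H_2})$. A direct computation with the product hyperoperation, using the scalar identities and $r \circ 0 = \{0\}$, gives $x^{\alpha} \circ y = \{(a, 0_{H_2})\}$, which is contained in $P = P_1 \times P_2$ and does not contain $0$ because $a \neq 0$. Hence the weakly $(\alpha,\beta)$-prime hypothesis forces $x^{\beta} = \{(1_{H_1}, 0_{H_2})\} \subseteq P$ or $y = (a, 1_{H_2}) \in P$; the first gives $1_{H_1} \in P_1$ and the second gives $1_{H_2} \in P_2$, either of which (again by the scalar identity property, $r = r \circ 1 \subseteq P_i$ for all $r$) forces $P_1 = H_1$ or $P_2 = H_2$, a contradiction.

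For (b), suppose $P = P_1 \times H_2$ with $P_1 \neq H_1$ (the case $P = H_1 \times P_2$ is symmetric). Given $x_1, y_1 \in H_1$ with $x_1^{\alpha} \circ_1 y_1 \subseteq P_1$, I would lift to $x = (x_1, 1_{H_2})$ and $y = (y_1, 1_{H_2})$; then $x^{\alpha} \circ y = (x_1^{\alpha} \circ_1 y_1) \times \{1_{H_2}\} \subseteq P_1 \times H_2 = P$, and $0 \notin x^{\alpha} \circ y$ because $1_{H_2} \neq 0_{H_2}$ (the identity is nonzero). Weakly $(\alpha,\beta)$-primeness of $P$ then yields $x^{\beta} = x_1^{\beta} \times \{1_{H_2}\} \subseteq P$ or $y \in P$, that is, $x_1^{\beta} \subseteq P_1$ or $y_1 \in P_1$; hence $P_1$ is $(\alpha,\beta)$-prime in $H_1$. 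Combining with (a), $P$ has the form (i) or (ii).

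The main obstacle is not a deep one but a matter of placing the hypotheses correctly: one must ensure the decomposition $P = P_1 \times P_2$ is actually valid (this is where the scalar identities and $r \circ 0 = \{0\}$ enter), and one must invoke the "nonzero" and "proper" hypotheses in exactly the right spots — the nonzero hypothesis to produce the element $a \neq 0$ in step (a), and the nonzero identity of $H_2$ (so $1_{H_2} \neq 0_{H_2}$) to guarantee $0 \notin x^{\alpha} \circ y$ in step (b). The latter point is precisely the reason that the \emph{weak} hypothesis on $P$ upgrades to full $(\alpha,\beta)$-primeness of the component $P_1$ (or $P_2$): in the product we can always keep the other coordinate equal to $1$, which prevents $x^{\alpha}\circ y$ from meeting $0$.
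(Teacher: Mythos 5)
Your proof follows essentially the same route as the paper's: write $P=P_1\times P_2$, use the test element $(1_{H_1},0_{H_2})^{\alpha}\circ(a,1_{H_2})$ with $0\neq a\in P_1$ to rule out both components being proper, and then lift $x^{\alpha}\circ_1 y\subseteq P_1$ to $(x,1_{H_2})^{\alpha}\circ(y,1_{H_2})$, where the $1_{H_2}$-coordinate keeps the product away from zero, so weak $(\alpha,\beta)$-primeness of $P$ upgrades to $(\alpha,\beta)$-primeness of the proper component. The only difference is that you additionally try to justify the decomposition $P=P_1\times P_2$ (which the paper simply assumes) by invoking $r\circ 0=\{0\}$, an identity not among the paper's axioms; since the facts actually needed in your two main steps ($0\notin x^{\alpha}\circ y$ and $x^{\alpha}\circ y\subseteq P$) already follow from the scalar identities and the hyperideal property alone, this does not affect the correctness of the core argument.
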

\begin{proof}
$\Longrightarrow$ Suppose that $P=P_1 \times P_2$ is a nonzero weakly $(\alpha,\beta)$-prime hyperideal of $H_1 \times H_2$ such that $P_1$ and $P_2$ are hyperideals of $H_1$ and $H_2$, respectively. Let us assume $P_1$ and $P_2$ are proper, and $P_1 \neq 0$. Take any $0 \neq x \in P_1$. Therefore we have $(0,0) \notin (1_{H_1},0)^{\alpha} \circ (x,1_{H_2}) \subseteq P_1 \times P_2$. Since $P$ is a nonzero weakly $(\alpha,\beta)$-prime hyperideal of $H_1 \times H_2$, we get the result that $(1_{H_1},0)^{\beta} \subseteq P_1 \times P_2$ or $(x,1_{H_2}) \in P_1 \times P_2$. It follows that $1_{H_1} \in P_1$ or $1_{H_2} \in P_2$. Then $P_1=H_1$ or $P_2=H_2$. This is a contradiction. Let us consider $P_1$ is proper and $P_2=H_2$. Now, we shows that $P_1$ is an $(\alpha,\beta)$-prime hyperideal of $H_1$. Let $x^{\alpha} \circ_1 y \subseteq P_1$ for $x,y \in H_1$. Hence we get $(0,0) \notin (x,1_{H_2})^{\alpha} \circ (y,1_{H_2}) \subseteq P_1 \times H_2$ and so we have $(x,1_{H_2})^{\beta} \subseteq P_1 \times H_2$ or $(y,1_{H_2}) \in P_1 \times H_2$. This implies that $x^{\beta} \subseteq P_1$ or $y \in P_1$. Similarly, it can be seen that if $P_1=H_1$ and $P_2$ is a proper hyperideal of $H_2$, then $P_2$ is $(\alpha,\beta)$-prime. 
\end{proof}
\begin{theorem}
Assume that  $H=H_1  \times \cdots \times H_n$ where $H_1,  \cdots, H_n$ are commutative multiplicative hyperrings,  $P$  a proper nonzero hyperideal of $H$ and $\alpha,\beta \in \mathbb{Z}^+$. Then the following are equivalent.
\begin{itemize}
\item[\rm(i)]~ $P$ is a weakly $(\alpha,\beta)$-prime hyperideal of $H$.
\item[\rm(ii)]~ $P=H_1 \times \cdots \times P_i \times \cdots \times H_n$ such that $P_i$ is an $(\alpha,\beta)$-prime hyperideal of $H_i$ for some $i \in \{1,\cdots,n\}$.
\item[\rm(iii)]~ $P$ is an $(\alpha,\beta)$-prime hyperideal of $H$.
\end{itemize}
\end{theorem}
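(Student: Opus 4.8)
The plan is to establish the cycle $\mathrm{(iii)}\Rightarrow\mathrm{(i)}$, $\mathrm{(ii)}\Rightarrow\mathrm{(iii)}$ and $\mathrm{(i)}\Rightarrow\mathrm{(ii)}$. The first implication is immediate: the hypothesis in the definition of a weakly $(\alpha,\beta)$-prime hyperideal is the hypothesis in the definition of an $(\alpha,\beta)$-prime hyperideal together with the extra restriction $0\notin x^{\alpha}\circ y$, so every $(\alpha,\beta)$-prime hyperideal of $H$ is automatically weakly $(\alpha,\beta)$-prime. For $\mathrm{(ii)}\Rightarrow\mathrm{(iii)}$, after relabelling the factors we may assume $P=P_{1}\times H_{2}\times\cdots\times H_{n}$ with $P_{1}$ an $(\alpha,\beta)$-prime hyperideal of $H_{1}$. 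The one computation to record is that, from the description of $\circ$ on the product, for $\mathbf{x}=(x_{1},\dots,x_{n})$ and $\mathbf{y}=(y_{1},\dots,y_{n})$ one has $\mathbf{x}^{\alpha}\circ\mathbf{y}=(x_{1}^{\alpha}\circ_{1}y_{1})\times\cdots\times(x_{n}^{\alpha}\circ_{n}y_{n})$, so that $\mathbf{x}^{\alpha}\circ\mathbf{y}\subseteq P$ is equivalent to $x_{1}^{\alpha}\circ_{1}y_{1}\subseteq P_{1}$; applying the $(\alpha,\beta)$-primeness of $P_{1}$ then gives either $x_{1}^{\beta}\subseteq P_{1}$, whence $\mathbf{x}^{\beta}=x_{1}^{\beta}\times\cdots\times x_{n}^{\beta}\subseteq P$, or $y_{1}\in P_{1}$, whence $\mathbf{y}\in P$. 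Thus $P$ is $(\alpha,\beta)$-prime.

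The substantive implication is $\mathrm{(i)}\Rightarrow\mathrm{(ii)}$, which I would prove by induction on $n$. The base case $n=2$ is precisely Proposition \ref{malek}. For the inductive step, regard $H=H_{1}\times H'$ where $H'=H_{2}\times\cdots\times H_{n}$ is a product of $n-1\ge 2$ commutative multiplicative hyperrings carrying the scalar identity $(1_{H_{2}},\dots,1_{H_{n}})$. Applying Proposition \ref{malek} to $H_{1}\times H'$ yields two possibilities. If $P=P_{1}\times H'$ with $P_{1}$ a proper $(\alpha,\beta)$-prime hyperideal of $H_{1}$, then $P$ already has the form in (ii) (with $i=1$). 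If instead $P=H_{1}\times P'$ with $P'$ a proper $(\alpha,\beta)$-prime hyperideal of $H'$, then $P'$ is in particular weakly $(\alpha,\beta)$-prime, and it is nonzero: were $P'=0$, then since $H'$ has at least two factors the elements $\mathbf{u}=(1_{H_{2}},0,\dots,0)$ and $\mathbf{v}=(0,1_{H_{3}},0,\dots,0)$ of $H'$ would satisfy $\mathbf{u}^{\alpha}\circ\mathbf{v}=\{\mathbf{0}\}\subseteq P'$ while $\mathbf{u}^{\beta}\not\subseteq P'$ and $\mathbf{v}\notin P'$, contradicting that $P'$ is $(\alpha,\beta)$-prime. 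Hence $P'$ is a proper, nonzero, weakly $(\alpha,\beta)$-prime hyperideal of a product of $n-1$ hyperrings, so by the induction hypothesis $P'=H_{2}\times\cdots\times P_{i}\times\cdots\times H_{n}$ for some $i$ with $P_{i}$ an $(\alpha,\beta)$-prime hyperideal of $H_{i}$; consequently $P=H_{1}\times P'$ has the required form, closing the induction.

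The only delicate point is this inductive step: one must re-bracket the $n$-fold product as $H_{1}\times H'$, verify that $H'$ again meets the hypotheses of Proposition \ref{malek} (commutativity and a scalar/nonzero identity), and supply the auxiliary fact that the zero hyperideal of a product of at least two hyperrings is never $(\alpha,\beta)$-prime, since the induction hypothesis is only available for \emph{nonzero} hyperideals. Beyond that, everything reduces to the component-wise description of the hyperoperation on a finite product, which makes both $\mathrm{(ii)}\Rightarrow\mathrm{(iii)}$ and the internal verifications in $\mathrm{(i)}\Rightarrow\mathrm{(ii)}$ routine.
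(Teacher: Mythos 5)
Your proof is correct and follows essentially the same route as the paper: the cycle (iii)$\Rightarrow$(i) trivially, (ii)$\Rightarrow$(iii) by the componentwise description of the product hyperoperation, and (i)$\Rightarrow$(ii) by induction on $n$ with Proposition \ref{malek} as the base case, the only cosmetic difference being that you split off the first factor $H_1\times H'$ while the paper splits off the last. Your additional verification that the zero hyperideal of a product of at least two factors is never $(\alpha,\beta)$-prime (so the induction hypothesis, which requires a nonzero hyperideal, really applies) is a point the paper's proof silently skips, and it is a worthwhile addition.
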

\begin{proof}
(i) $\Longrightarrow$ (ii) Let $P=P_1 \times \cdots \times P_n$ is a weakly $(\alpha,\beta)$-prime hyperideal of $H$. We use the induction on $n$. If $n=2$, then the claim is true by Proposition \ref{malek}. Let the claim be true for $n-1$.
Assume that $I=P_1 \times \cdots \times P_{n-1}$. So $P=I \times P_n$. Then we conclude that  $I$ is an $(\alpha,\beta)$-prime hyperideal of $H_1 \times \cdots \times H_{n-1}$ and $P_n=H_n$ or $I=H_1 \times \cdots \times H_{n-1}$ and $P_n$  is an $(\alpha,\beta)$-prime hyperideal of $H_n$ by Proposition \ref{malek}. In the first possibility,  we obtain $I=H_1 \times \cdots \times P_i \times \cdots \times H_{n-1}$ such that $P_i$ is an $(\alpha,\beta)$-prime hyperideal of $H_i$ by induction hypothesis  and $P_n=H_n$. This shows that $P=H_1 \times \cdots \times P_i \times \cdots \times H_{n-1} \times H_n$ such that  $P_i$ is  an $(\alpha,\beta)$-prime hyperideal of $H_i$. In the second possibility, we have $I_i = H_i$  for all $i \in \{1,\cdots,n-1\}$ and $P_n$  is an $(\alpha,\beta)$-prime hyperideal of $H_n$. It follows that $P=H_1 \times \cdots \times H_{n-1} \times P_n$ where $P_n$ is is an $(\alpha,\beta)$-prime hyperideal of $H_n$.

(ii) $\Longrightarrow$ (iii) Without loss of generality, we assume that $P_1$ is an $(\alpha,\beta)$-prime hyperideal of $H_1$ and $P_i=H_i$ for all $i \neq 1$. Let $(x_1,x_2,\cdots,x_n)^{\alpha} \circ (y_1,y_2,\cdots,y_n) \subseteq P_1 \times H_2 \times \cdots \times H_n$ such that $(y_1,y_2,\cdots,y_n) \notin P_1 \times H_2 \times \cdots \times H_n$. This implies that $x_1^{\alpha} \circ_1 y_1 \subseteq P_1$ and $y_1 \notin P_1$. Since $P_1$ is an $(\alpha,\beta)$-prime hyperideal of $H_1$, we get $x_1^{\beta} \subseteq P_1$. It follows that $(x_1,x_2,\cdots,x_n)^{\beta} \subseteq P_1 \times H_2 \times \cdots \times H_n$ and this completes the proof.

(iii) $\Longrightarrow$ (i) It is straightforward.
\end{proof}


\end{document}